\documentclass[12pt, reqno]{amsart}

\usepackage{verbatim}
\usepackage{amssymb, amsmath, amsthm}
\usepackage{hyperref}
\usepackage{amscd}   
\usepackage{fullpage}
\usepackage[all]{xy} 
\usepackage{longtable}
\usepackage[normalem]{ulem}


\usepackage[usenames, dvipsnames]{color}


\usepackage{graphicx}




\newtheorem{lemma}{Lemma}[section]
\newtheorem{theorem}[lemma]{Theorem}
\newtheorem{proposition}[lemma]{Proposition}
\newtheorem{prop}[lemma]{Proposition}
\newtheorem{cor}[lemma]{Corollary}

\newtheorem*{theorem*}{Theorem}

\theoremstyle{definition}
\newtheorem{remark}[lemma]{Remark}

\newtheorem{example}[lemma]{Example}
\newtheorem{defn}[lemma]{Definition}

\newtheorem{claim*}{Claim}


\newcommand{\PP}{{\mathbb P}}
\newcommand{\CC}{{\mathbb C}}
\newcommand{\FF}{{\mathbb F}}
\newcommand{\QQ}{{\mathbb Q}}
\newcommand{\RR}{{\mathbb R}}
\newcommand{\ZZ}{{\mathbb Z}}
\newcommand{\NN}{{\mathbb N}}



\usepackage[mathscr]{euscript}


\DeclareMathOperator{\Aut}{Aut}

\DeclareMathOperator{\ord}{ord}

\DeclareMathOperator{\PGL}{PGL}



\numberwithin{equation}{section}
\numberwithin{table}{section}




\title{Dynamical Belyi maps}
\author{Jacqueline Anderson}
\author{Irene Bouw}
\author{Ozlem Ejder}
\author{Neslihan Girgin}
\author{Valentijn Karemaker}
\author{Michelle Manes}
\thanks{MM partially supported by NSF-HRD 1500481 (AWM ADVANCE  grant) and by the Simons Foundation grant \#359721.}

\email{jacqueline.anderson@bridgew.edu, irene.bouw@uni-ulm.de, ejder@usc.edu, \newline nslkr09@gmail.com, vkarem@math.upenn.edu, mmanes@math.hawaii.edu}

\begin{document}


\begin{abstract}
We study the dynamical properties of a large class of rational maps
with exactly three ramification points. 
By constructing families of such maps, we obtain infinitely many conservative maps of fixed degree $d$; 
this answers a question of Silverman.
Rather precise results on the reduction
of these maps yield strong information on their $\QQ$-dynamics.

\noindent
 2010 {\em Mathematics Subject
  Classification}. Primary: 37P05. Secondary: 11G32, 37P15.
\end{abstract}

\maketitle

\section{Introduction}\label{sec:Introduction}


Let $X$ be a smooth projective curve. A \textit{Belyi map} is a finite cover $f:X\to \PP^1$ that is branched exactly at $0, 1, \infty$.
In this paper we restrict to the special case that $X=\PP^1$, allowing
for iteration of $f$ and the study of dynamical behavior.  We write
$f^n$ for the $n$-fold composition of $f$ with itself.  If we further
insist that $f(\{ 0 , 1, \infty \}) \subseteq \{ 0, 1, \infty \}$,
then all iterates $f^n$ are also Belyi maps.  These maps have
been called \emph{dynamical Belyi maps}~\cite{SV},\cite{Zvonkin}.

Dynamical Belyi maps  are the central objects of study in the present work.
These maps are a special class of \emph{post-critically
finite} (PCF) maps; a map $f:\PP^1 \to \PP^1$ is PCF if each
ramification point has a finite forward orbit.
The study of PCF maps has a long history in complex and arithmetic dynamics, starting from Thurston's topological characterization of these maps in the early 1980s and continuing to the present day~\cite{BIJL},~\cite{BBLPPcensus},~\cite{LMY},~\cite{pilgrim}.

\subsection*{Outline and summary of results}
In \S\ref{sec:RET}, we describe a large class of  dynamical Belyi maps with exactly three ramification points: the genus-0 single-cycle normalized Belyi maps.
In addition, each of the three ramification points --- $0$, $1$, and $\infty$ --- is fixed.    
Maps $f: \PP^1 \to \PP^1$ for which every ramification point is fixed are called \emph{conservative rational maps} or \emph{critically fixed}, and have been the subject of some recent study~\cite{CGNPP},~\cite{Pconservative},~\cite{Tischler}.  

The genus-0 single-cycle Belyi maps are each naturally defined over $\QQ$ (Proposition \ref{prop:rigid}), a result which  suffices to answer in the negative a question of Silverman~\cite[p.\ 110]{JHSMod}: Is the number of $\PGL_2$-conjugacy classes of conservative rational maps of degree $d$ in $\QQ[z]$ or $\QQ(z)$  bounded independently of $d$?

In \S\ref{sec:GenFams}, we give explicit equations for two particular infinite families of maps in this class (Propositions~\ref{prop:GenFams} and~\ref{prop:GenFams4}),
 providing many new examples of maps of interest in complex and arithmetic dynamics.   

Studying the reduction of the maps from \S\ref{sec:RET} to
characteristic $p$ leads to one of our main results
(Theorem~\ref{thm:monored}): Given  a genus-0 single-cycle normalized Belyi
map $f$ of degree $d\geq 3$, we describe 
a necessary and sufficient condition for
the reduction of $f$ modulo a prime $p$   to be the monomial map
$x^d$. 

In \S\ref{sec:Dynamics}, we use this result, together with standard
local-global theorems in arithmetic dynamics, to study the dynamical
behavior of the genus-0 single-cycle Belyi maps $f$. Our main result
(Theorem~\ref{qdynamicsGeneral}) gives conditions for the set of
$\QQ$-rational preperiodic points to coincide with the set of
$\QQ$-rational fixed points of $f$. For one family of explicit Belyi
maps we determine the set of preperiodic points exactly
(Proposition \ref{fixedptlemma}).

\subsection*{Acknowledgments}
This project began at the Women in Numbers Europe 2 conference at the
Lorentz Center. We thank the Lorentz Center for providing excellent working conditions, and we thank the Association for Women in Mathematics for supporting WIN-E2 and other research collaboration conferences for women through their NSF ADVANCE grant.  We also thank the referee for numerous helpful comments, all of which greatly improved the paper.


\section{Background on Belyi maps}\label{sec:RET}


 A \textit{Belyi map} is a finite cover $f: X \to \mathbb{P}^1$
 of smooth projective curves defined over $\CC$ that is branched
 exactly at $0,1,\infty$.  Belyi maps can be described topologically
 as \emph{dessins d'enfants}~\cite{Sch} or combinatorially in terms of
 generating systems.  

\begin{defn}\label{gensysdef}
Fix an integer $d>1$. A \textit{generating system} of degree $d$ is a
triple ${\boldsymbol \rho}=(\rho_1, \rho_2,\rho_3)$ of
permutations $\rho_i\in S_d$ that satisfy
\begin{itemize}
\item $\rho_1\rho_2\rho_3=1$,
\item $G:=\langle \rho_1, \rho_2,\rho_3\rangle\subset S_d$ acts
  transitively on $\{1, 2, \ldots, d\}$.  
\end{itemize}

The \textit{combinatorial type} of ${\boldsymbol \rho}$ is a tuple
$\underline{C}:=(d; C(\rho_1), C(\rho_2),C(\rho_3))$, where $d$ is
the degree and $C(\rho_i)$ is the conjugacy class of $\rho_i$ in
$S_d$.
\end{defn}

Two generating systems $\boldsymbol \rho$ and $\boldsymbol \rho'$
are \textit{equivalent} if there exists a permutation $\tau\in S_d$ such
that $\rho_i'=\tau\rho_i\tau^{-1}$ for $i=1,2,3$. 
Furthermore, two Belyi maps $f_i : X_i \to \PP^1$ are
 \textit{isomorphic} if there exists an isomorphism $\iota : X_1 \to
 X_2$ making
\[
\xymatrix{
X_1 \ar[dr]^{f_1} \ar[rr]^{\iota} && X_2\ar[dl]_{f_2} \\
 & \PP^1  \\
}
\]
commutative.  In particular, the $f_i$ have the same branch locus.
Riemann's Existence Theorem (\cite[Theorem 2.13]{volklein}) yields a
bijection between equivalence classes of generating systems and
isomorphism classes of Belyi maps $f:X\to \PP^1$.

Let ${\boldsymbol \rho}$ be a generating system.  The conjugacy class
$C_i:=C(\rho_i)$ of $S_d$ corresponds to a partition $\sum_{j=1}^{r_i}
n_j=d$ of $d$. The \textit{length} $r_i=r(C_i)$ of $C_i$ is the number
of cycles of the elements of $C_i$. Note that we include the
$1$-cycles. The nonnegative integer $g:=(d+2-r_1-r_2 -r_3)/2$ is called the \textit{genus} of the generating system.  If $f:X\to \mathbb{P}^1$
is the Belyi map corresponding to $\boldsymbol{\rho}$ then $g=g(X)$ and $r_i=|f^{-1}(t_i)|$ is the cardinality of the inverse image of the
$i$th branch point $t_i$.

 In this paper we only consider the case that $g=g(X)=0$.  We write
\[
f: \mathbb{P}^1_x \to \mathbb{P}^1_t, \qquad x\mapsto t=f(x);
\]
the subscript indicates the coordinate of the corresponding projective
line. Note that we write $f$ both for the rational function
$f(x)\in \CC(x)$ and the cover defined by it.

 We restrict, moreover, to the case that $C_i$ is the conjugacy class
 of a single cycle, i.e.~for each $i\in \{1,2,3\}$ the partition
 $\sum_{j}n_j$ corresponding to $C_i$ contains a unique part $n_j$
 different from $1$. We denote this part by $e_i$. Formulated
 differently, the corresponding Belyi map $f$ has a unique
 ramification point above $t_i$, with ramification index $e_i$. The
 assumption that $g=g(X)=0$ translates to the condition
\[
2d+1=e_1+e_2+e_3.
\]
We call this situation the \textit{genus-$0$ single-cycle case}, and
we write $(d; e_1, e_2, e_3)$ for the combinatorial type of $f$. More
generally, given four integers $d, e_1, e_2, e_3$ satisfying $2\leq
e_i \leq d$ and $2d+1=e_1+e_2+e_3$, we call $(d; e_1, e_2, e_3)$ an
\textit{abstract combinatorial type of genus $0$}.

We say that a genus-$0$ single-cycle Belyi map $f$ is
\textit{normalized} if its ramification points are $0, 1, \text{ and
} \infty$, and if, moreover,
\[
f(0)=0, \quad f(1)=1, \quad f(\infty)=\infty.
\]
 Since $f$ has three ramification points, every isomorphism class of
 covers contains a unique normalized Belyi map. 

\begin{remark}
In this paper we always assume that $f$ has exactly three (as opposed
to at most three) ramification points.  We may allow some $e_i = 1$
without substantial change to our theoretical results. In this case
the Belyi map $f$ is Galois, and its Galois group is cyclic. Therefore $f$
is linearly conjugate (in the sense of Definition~\ref{def:linconj}
below) to the pure power map $f(x) = x^d$.  This case is well
understood, so we omit the details.

For instance, if we include the case $e_2=1$ (i.e.~the combinatorial type $(d; d, 1, d)$) in our considerations, this only affects a few results: the count of normalized Belyi maps of each degree in Corollary~\ref{cor:BelyiNum2} slightly changes, and the statement of Proposition~\ref{prop:dynamicsp=2}
needs to be altered by including the assumption
$\nu>0$ for the conclusion to hold. 
If one of the other $e_i$ is trivial, slightly different but equally straightforward adaptions are necessary.
\end{remark}

The following proposition states that the triple of conjugacy classes
corresponding to the combinatorial type $(d; e_1, e_2, e_3)$ is rigid
and rational.

\begin{prop}\label{prop:rigid}
Let $\underline{C}:=(d; e_1, e_2, e_3)$ be an abstract combinatorial type of
genus $0$. Then there exists a unique normalized Belyi map
$f:\PP^1_\mathbb{C}\to \mathbb{P}^1_\mathbb{C}$ of combinatorial
type $\underline{C}$. Moreover, the rational map $f$ may be defined
over $\mathbb{Q}$.
\end{prop}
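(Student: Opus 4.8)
\emph{Overall plan.} The proposition asserts that the triple $(C_1,C_2,C_3)$ of conjugacy classes in $S_d$, where $C_i$ is the class of a single $e_i$-cycle, is \emph{rational} and \emph{rigid}, and then deduces definability over $\QQ$ by descent. Rationality is immediate: in $S_d$ any two $e$-cycles are conjugate, so each $C_i$ is fixed by the Galois action on conjugacy classes (equivalently, $\rho_i$ and $\rho_i^{\,k}$ are conjugate whenever $\gcd(k,e_i)=1$), while the branch points $0,1,\infty$ lie in $\PP^1(\QQ)$. The substantive point is rigidity: by Riemann's Existence Theorem (the bijection quoted above) it suffices to show that there is, up to equivalence, exactly one generating system $\boldsymbol\rho=(\rho_1,\rho_2,\rho_3)$ with each $\rho_i$ a single $e_i$-cycle. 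Granting this, there is a unique isomorphism class of genus-$0$ single-cycle Belyi maps of type $\underline{C}$, and it contains a unique normalized representative, since an isomorphism of normalized maps is an element of $\PGL_2$ fixing $0$, $1$, $\infty$, hence the identity.

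\emph{The hard step: rigidity.} I expect the combinatorial uniqueness to be the main obstacle. I would argue by induction on $d$. A useful starting observation is that the fixed-point sets of $\rho_1,\rho_2,\rho_3$ are pairwise disjoint: a point fixed by two of them is fixed by the third (as $\rho_1\rho_2\rho_3=1$), hence by $\langle\rho_1,\rho_2,\rho_3\rangle$, contradicting transitivity. After permuting indices one may assume $e_3=\max_i e_i$. If $e_3=d$, the cover is a polynomial totally ramified over $\infty$, and the statement reduces to the classical fact that for $e_1+e_2=d+1$ an $e_1$-cycle and an $e_2$-cycle with product a $d$-cycle are, up to simultaneous conjugation, uniquely $(1\,2\,\cdots\,e_1)$ and $(e_1\,e_1{+}1\,\cdots\,d)$. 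If $e_3<d$ then all $e_i<d$, so the associated dessin d'enfants has leaves; deleting a suitably chosen leaf and its edge produces the dessin of a genus-$0$ single-cycle Belyi map of strictly smaller type, to which the inductive hypothesis applies --- the delicate points being to keep all parts $\ge 2$ under the reduction and to check the reconstruction is forced. A cleaner alternative is to cite the irreducibility of pure-cycle Hurwitz spaces (Liu--Osserman): for three branch points this Hurwitz space is $0$-dimensional, so irreducibility means it is a single reduced point. (One could also run a Frobenius-formula count of the triples in $C_1\times C_2\times C_3$ with trivial product, verifying that, after discarding non-transitive configurations and dividing by the $S_d$-conjugation action, the count is $1$.)

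\emph{Descent to $\QQ$.} With rigidity established, definability over $\QQ$ is formal. The normalized map $f$ is a priori defined over $\Qbar$ (e.g.\ by Belyi's theorem). For $\sigma\in\Gamma:=\Gal(\Qbar/\QQ)$, the conjugate $f^\sigma$ is again a Belyi map: its branch locus is $\sigma(\{0,1,\infty\})=\{0,1,\infty\}$, its ramification profile over each branch point is unchanged, and the relations $f^\sigma(0)=0$, $f^\sigma(1)=1$, $f^\sigma(\infty)=\infty$ hold since $0,1,\infty\in\PP^1(\QQ)$; thus $f^\sigma$ is again a normalized genus-$0$ single-cycle Belyi map of type $\underline{C}$. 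By uniqueness $f^\sigma\cong f$, say $f^\sigma=f\circ\iota$ with $\iota\in\PGL_2$; since $f$ and $f^\sigma$ are ramified over $0,1,\infty$ precisely at the points $0,1,\infty$, $\iota$ permutes $\{0,1,\infty\}$, and from $f^\sigma(0)=0$ together with $f^{-1}(0)\cap\{0,1,\infty\}=\{0\}$ one gets $\iota(0)=0$, and likewise $\iota(1)=1$ and $\iota(\infty)=\infty$, so $\iota=\id$ and $f^\sigma=f$. As $\sigma$ was arbitrary, writing $f=P/Q$ with $Q$ monic gives $P,Q\in\QQ[x]$, so $f$ is defined over $\QQ$. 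This last step is just the Rigidity Theorem of inverse Galois theory made explicit, using the canonical normalized model and the triviality of $\Aut(f)$.
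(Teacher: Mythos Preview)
Your proposal is correct and aligns with the paper's approach: both reduce existence and uniqueness to the count of generating systems up to equivalence and invoke Liu--Osserman for the answer $1$, and both deduce definability over $\QQ$ from the fact that the isomorphism class contains a unique normalized representative (the paper also mentions V\"olklein's rigidity criterion as an alternative). The paper is considerably terser --- it skips your inductive sketch entirely and compresses your Galois-descent paragraph to a single sentence --- so your version supplies details the paper leaves implicit, but the logical skeleton is the same.
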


\begin{proof}
We have already seen that the number of normalized Belyi maps of
combinatorial type $\underline{C}$ is the cardinality of the finite set 
\[
\left\{ (\rho_1, \rho_2, \rho_3) \in C_1 \times C_2 \times C_3 \text{
  such that } \langle \rho_i \rangle \subset S_d \text{ transitive, }
\prod_i \rho_i = e \right\} \Bigg/ \mathord{\sim}
\]
of generating systems of combinatorial type $\underline{C}$ up to
equivalence.   Liu and Osserman prove in
\cite[Lemma 2.1]{liuosserman} that in the genus-$0$ single-cycle case
the cardinality of this set is $1$.  

We have already seen that the (unique) isomorphism class of Belyi maps
of the given type contains a unique normalized one. This implies
that the coefficients of this unique map are in $\QQ$.
Alternatively, one may also apply the rigidity criterion from
\cite[Corollary 3.13]{volklein}.
\end{proof}

\begin{remark}
Let $f:\PP^1_x\to \PP^1_t$ be a normalized Belyi map of combinatorial
type $\underline{C}=(d; e_1, e_2, e_3)$, i.e.~a genus-$0$ single-cycle
map. The coordinates $x$ and $t$ are completely determined by the
normalization of the ramification and branch points.  Proposition
\ref{prop:rigid} states that the (unique) rational function defining the
Belyi map $f$ satisfies $f(x)\in \QQ(x)$.
\end{remark}

Normalized Belyi maps are examples of \textit{conservative rational
maps}, i.e.~rational maps $f: \PP^1 \to \PP^1$ such that every
ramification point is fixed.  In~\cite[top of p.\ 110]{JHSMod},
Silverman asked if the number of $\PGL_2$-equivalence classes of
conservative maps $\PP^1 \to \PP^1$ of degree $d$ in $\QQ[z]$ or
$\QQ(z)$ may be bounded independently of $d$.  We use
Proposition~\ref{prop:rigid} to answer this question.

\begin{defn}\label{def:linconj}
Two rational functions $f, g: \PP^1_K \to \PP^1_K$ are
\textit{linearly conjugate over a field} $K$ if there is a $\phi \in
\Aut(\PP^1_K) \cong \PGL_2(K)$ such that $f^\phi:=\phi^{-1} f \phi =
g$.
\end{defn}

Note that linear conjugacy respects iteration; that is, if $f^\phi =
g$, then $\left(f^n\right)^\phi = \left(f^\phi\right)^n=g^n$. 
Note also that for normalized genus-$0$ single-cycle Belyi maps
Proposition \ref{prop:rigid} implies that linear conjugacy over $\CC$
is the same as linear conjugacy over $\QQ$, so we may omit the
mention of the field.

\begin{remark}
Definition~\ref{def:linconj} is the standard equivalence relation used
in arithmetic and holomorphic dynamics.  It is different from the
notion of isomorphic covers introduced at the beginning of this
section in which the target $\PP^1$ is fixed.  When we say that maps
are conjugate, we mean linearly conjugate in the sense of this
definition, where the same isomorphism is used on both the source and
the target $\PP^1$.
\end{remark}

\begin{lemma}\label{lem:BelyiNum}
Let $f: \PP^1_\CC \to \PP^1_\CC $ be a normalized Belyi map of
combinatorial type $(d; e_1, e_2, e_3)$ and let $g$ be a normalized
Belyi map with combinatorial type $(d; e'_1, e'_2, e'_3)$.  Then $f$
and $g$ are linearly conjugate over $\CC$ if and only if there is some
permutation $\sigma \in \mathcal S_3$ such that $e_i = e'_{\sigma(i)}$
for $i = 1, 2, 3$.
\end{lemma}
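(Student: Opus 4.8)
The plan is to prove both implications directly, using the fact (from Proposition~\ref{prop:rigid} and the surrounding discussion) that a normalized genus-$0$ single-cycle Belyi map is uniquely determined by its combinatorial type $(d; e_1, e_2, e_3)$, together with the rigidity that pins down the ramification and branch points as $\{0,1,\infty\}$.

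First I would handle the easy direction ($\Leftarrow$). Suppose $e_i = e'_{\sigma(i)}$ for some $\sigma \in \mathcal{S}_3$. The branch points $0, 1, \infty$ can be permuted by an element $\phi \in \PGL_2(\QQ)$ (the symmetric group on three points embeds in $\Aut(\PP^1)$ via the classical anharmonic group: e.g. $z \mapsto 1-z$, $z \mapsto 1/z$ generate it). Conjugating $f$ by the appropriate such $\phi$ on both source and target yields a Belyi map whose ramification point of index $e_i$ now sits above the $\sigma(i)$-th branch point; after this conjugation we obtain a normalized Belyi map of combinatorial type $(d; e'_1, e'_2, e'_3)$. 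By the uniqueness in Proposition~\ref{prop:rigid}, it must equal $g$. Hence $f^\phi = g$, so $f$ and $g$ are linearly conjugate over $\QQ$, a fortiori over $\CC$. One small point to check here is that permuting the branch points by $\phi$ also permutes the fixed ramification points correctly and that $\phi$ fixes the set $\{0,1,\infty\}$ setwise, so the conjugated map is again \emph{normalized}; this is immediate since each of the six elements of the anharmonic group permutes $\{0,1,\infty\}$.

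For the converse ($\Rightarrow$), suppose $\phi \in \PGL_2(\CC)$ satisfies $f^\phi = g$. Ramification behavior is preserved under linear conjugacy: $x_0$ is a ramification point of $f$ with index $e$ if and only if $\phi^{-1}(x_0)$ is a ramification point of $g$ with the same index $e$, and likewise $f(x_0)$ is a branch point of $f$ iff $\phi(f(x_0))$ is a branch point of $g$. Since the ramification points of both $f$ and $g$ are exactly $\{0,1,\infty\}$, the automorphism $\phi$ maps $\{0,1,\infty\}$ bijectively to itself, hence induces a permutation $\sigma \in \mathcal{S}_3$ of these three points (indexing $0,1,\infty$ as $1,2,3$). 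Tracking ramification indices through $\phi$: the ramification point of $f$ above the $i$-th branch point has index $e_i$, it is carried by $\phi^{-1}$ to the ramification point of $g$ above the $\sigma(i)$-th branch point, which has index $e'_{\sigma(i)}$; since conjugation preserves the local degree, $e_i = e'_{\sigma(i)}$, as claimed.

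The main obstacle — really the only thing requiring care — is verifying in the forward direction that $\phi$ must genuinely permute $\{0,1,\infty\}$ rather than moving these points elsewhere. This rests on the hypothesis (emphasized in the Remark after the definition of ``normalized'') that $f$ and $g$ have \emph{exactly} three ramification points, namely $0,1,\infty$; a Möbius transformation realizing the conjugacy must send the ramification locus of $f$ onto that of $g$, forcing $\phi(\{0,1,\infty\}) = \{0,1,\infty\}$. Once that is established, the bookkeeping of ramification indices is routine.
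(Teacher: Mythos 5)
Your proposal is correct and follows essentially the same route as the paper: the forward direction uses invariance of ramification indices under conjugation plus the fact that the ramification locus is exactly $\{0,1,\infty\}$, and the converse uses a M\"obius transformation permuting $\{0,1,\infty\}$ together with the uniqueness statement of Proposition~\ref{prop:rigid}. The only difference is cosmetic bookkeeping of the permutation's direction, which does not affect the argument.
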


\begin{proof}
The ramification index of a point is a dynamical invariant in the
following sense: for a nonconstant function $f: \PP^1 \to \PP^1$, any
point $\alpha \in\PP^1$, and any $\phi \in \PGL_2$, the ramification
indices satisfy $e_\alpha(f) = e_{\phi^{-1}(\alpha)}(f^\phi)$.  

In the case of normalized Belyi maps, the only points of $\PP^1$ with
ramification index greater than one are $t_1=0$, $t_2=1$, and
$t_3 = \infty$. Assume that $f$ and $g$ are normalized Belyi maps with
$f^\phi = g$ for some $\phi\in \PGL_2$. Then we may define a
permutation $\sigma\in S_3$ by
\[
e_i= \text{ ramification index of $f$ at $t_i$ } = \text{ ramification
  index of $g$ at $\phi^{-1}(t_i)$ } = e'_{\sigma(i)}.
\]

Conversely, given a permutation $\sigma\in S_3$ and a normalized Belyi
map $g$ of combinatorial type $(d; e_{\sigma(1)}, e_{\sigma(2)},
e_{\sigma(3)})$, there exists a unique $\phi\in \PGL_2$ satisfying
$\phi(t_i) = t_{\sigma(i)}$ for $i=1,2,3$. Proposition
\ref{prop:rigid} implies that $g=f^\phi$, since both normalized Belyi
maps have the same combinatorial type.
\end{proof}

Lemma~\ref{lem:BelyiNum}, together with the rationality
result from Proposition~\ref{prop:rigid}, answers Silverman's question in the
negative.  

\begin{cor}\label{cor:BelyiNum2}
The number of $\PGL_2$-conjugacy classes of conservative polynomials in $\QQ[z]$ of degree~$d\geq 3$ is at least $\left\lfloor \frac{d-1}2 \right\rfloor$.
The number of $\PGL_2$-conjugacy classes of nonpolynomial conservative rational maps in $\QQ(z)$ of degree~$d\geq 4$ is at least 
\begin{equation}\label{corsum}
 \sum_{i=1}^{\left\lfloor{\frac{d-1}3}\right\rfloor} \left\lfloor{\frac{d+1-3i}{2}}\right\rfloor.
 \end{equation}
\end{cor}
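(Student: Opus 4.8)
The plan is to count the relevant normalized Belyi maps using Proposition~\ref{prop:rigid} and Lemma~\ref{lem:BelyiNum}, and then translate this count into a lower bound on the number of $\PGL_2$-conjugacy classes. First I would note that by Proposition~\ref{prop:rigid} every abstract combinatorial type $(d; e_1, e_2, e_3)$ of genus $0$ — that is, every triple with $2 \le e_i \le d$ and $e_1 + e_2 + e_3 = 2d+1$ — gives rise to a genuine normalized Belyi map defined over $\QQ$, and by Lemma~\ref{lem:BelyiNum} two such maps are linearly conjugate over $\CC$ (equivalently over $\QQ$) if and only if their ramification data agree up to permutation. Hence the number of $\PGL_2$-conjugacy classes arising this way equals the number of \emph{multisets} $\{e_1, e_2, e_3\}$ with the above constraints. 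So the whole problem reduces to a combinatorial count of such multisets, split according to whether the map is a polynomial or not.

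For the polynomial case, I would recall the standard fact that a genus-$0$ single-cycle Belyi map is a polynomial precisely when one of the ramification points lies over $\infty$ with full ramification, i.e.\ when some $e_i = d$. Setting, say, $e_3 = d$, the genus-$0$ condition forces $e_1 + e_2 = d+1$ with $2 \le e_1, e_2 \le d$; counting unordered pairs $\{e_1, e_2\}$ summing to $d+1$ with both entries at least $2$ gives exactly $\lfloor (d-1)/2 \rfloor$ possibilities. (One should check that different such pairs really do give non-conjugate \emph{polynomials}, which again follows from Lemma~\ref{lem:BelyiNum} since the full multiset of ramification indices, including the repeated value at $\infty$, is determined; and that these maps are indeed polynomials of degree $d$.) This yields the first assertion.

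For the nonpolynomial case, I would count multisets $\{e_1, e_2, e_3\}$ with $e_1 + e_2 + e_3 = 2d+1$, $2 \le e_i \le d-1$ (strict inequality $e_i < d$ ruling out the polynomial maps). To get a clean closed form as in \eqref{corsum}, it is convenient to \emph{under}count by counting ordered triples in a normalized range rather than multisets, so that the stated sum is a valid lower bound rather than the exact value. Writing $e_i = d - a_i$ or similar, the constraint becomes a bounded partition count; stratifying by the smallest coordinate $i$ (ranging over $1 \le i \le \lfloor (d-1)/3 \rfloor$) and counting, for each such $i$, the number of ways to choose the remaining two coordinates gives the inner term $\lfloor (d+1-3i)/2 \rfloor$, and summing over $i$ produces \eqref{corsum}. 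I would verify the index ranges carefully at the endpoints (where $e_i$ hits $2$ or $d-1$) and confirm that distinct terms in the sum correspond to genuinely distinct conjugacy classes via Lemma~\ref{lem:BelyiNum}.

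The main obstacle is purely bookkeeping: getting the combinatorial count to match the precise expression in \eqref{corsum} without off-by-one errors at the boundaries $e_i = 2$ and $e_i = d-1$, and arranging the stratification so that the resulting quantity is provably a \emph{lower} bound (this is why one counts a subfamily of ordered representatives rather than all multisets, accepting some loss). There is no serious mathematical difficulty beyond this: Propositions~\ref{prop:rigid} and Lemma~\ref{lem:BelyiNum} do all the conceptual work of turning ``distinct ramification data'' into ``distinct rational conjugacy classes,'' and what remains is to count lattice points in a triangle subject to box constraints and separate the polynomial stratum from the rest.
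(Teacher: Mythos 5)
Your proposal is correct and follows essentially the same route as the paper: Proposition~\ref{prop:rigid} and Lemma~\ref{lem:BelyiNum} reduce the problem to counting multisets $\{e_1,e_2,e_3\}$ with $2\le e_i\le d$ and $e_1+e_2+e_3=2d+1$, the stratum with some $e_i=d$ gives the $\left\lfloor \frac{d-1}{2}\right\rfloor$ polynomial classes, and stratifying the remaining multisets by the largest index $e_3=d-i$ with $1\le i\le \lfloor\frac{d-1}{3}\rfloor$ gives $\lfloor\frac{d+1-3i}{2}\rfloor$ choices per stratum, exactly as in the paper. The only (harmless) difference is your hedge that the sum might be an undercount; with the stratification by the largest part it is in fact the exact count of nonpolynomial classes of this form, which the paper uses directly.
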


\begin{proof}
We count the number of $\PGL_2$-conjugacy classes of genus-0
single-cycle normalized Belyi maps in degree $d$, which serves as a
lower bound for all conservative rational maps in the given degree, up
to linear conjugacy.  By Lemma~\ref{lem:BelyiNum}, this equals the
number of partitions of $2d+1$ into exactly three parts such that each
part is at least $2$ and none exceed $d$.  The number of partitions
equals the cardinality of
 \begin{equation}\label{eq:count}
 \{2\leq e_1 \leq e_2 \leq e_3 \leq d \quad \mid \quad e_1 + e_2 + e_3 =2d+1\}.
 \end{equation}

If $f$ is a polynomial of degree $d$, then the ramification index $e_3
= e_\infty(f) = d$.  Hence, it is enough to count pairs $(e_1,e_2)$
such that $2 \leq e_1 \leq e_2 \leq d-1$ and $e_1+ e_2 = d+1$.  We see
that $e_1$ can take on $\left\lfloor \frac{d-1}2 \right\rfloor$
distinct values, and $e_2$ is determined by $e_1$.

To count nonpolynomial maps, we use the same strategy for every possible value of $e_3\leq d-1$.  Fixing $e_3 = d-i$, we count pairs $(e_1, e_2)$ such that 
\[
2 \leq e_1 \leq e_2 \leq d-i \quad \text{ and } \quad e_1 + e_2  = d+i+1.
\]  
These constraints give that $2i+1 \leq e_1 \leq \lfloor \frac{d+i+1}2\rfloor$, yielding $\lfloor{\frac{d+1-3i}{2}}\rfloor$ distinct possibilities.

Finally, the constraints in~\eqref{eq:count} require that $d-1 \geq e_3 \geq \left\lceil \frac{2d+1}3 \right\rceil$.  Writing $e_3 = d-i$ gives $1 \leq i \leq \lfloor \frac {d-1} 3 \rfloor$, and the result follows.
\end{proof}

\begin{remark}\mbox{} \begin{enumerate}
\item The sum in \eqref{corsum} 
can be calculated explicitly. Let  $N(d)$ be the number of $\PGL_2$-conjugacy classes of genus-0 single-cycle normalized Belyi maps of degree $d$ (including the polynomial maps). Then

\begin{equation*}\label{eq:N(d)}
N(d) = \frac{1}{12}(d^2+4d-c), \quad \text{where } c = \begin{cases}
5 & \text{ if } d \equiv 1 \pmod 6\\
8 & \text{ if } d \equiv 4 \pmod 6\\
9 & \text{ if } d \equiv 3, 5 \pmod 6\\
12 & \text{ if } d \equiv 0, 2 \pmod 6\\
\end{cases}
\end{equation*}
This serves as a lower bound for the number of conservative rational maps of degree $d$ in $\QQ(z)$.

\item A result of Tischler \cite{Tischler} counts  
all monic conservative polynomials $f$ in $\CC[z]$ normalized by
requiring that $f(0)=0$. Tischler shows that there are exactly
$\binom{2d-2}{d-1}$ such maps. This result is generalized
in \cite{CGNPP}, where conservative rational maps rather than
polynomials are counted. Note that in this more general situation
counting is more difficult as one has to impose the condition that $f$ is
conservative. In our setting this condition is automatically satisfied.
\end{enumerate}
\end{remark}


\section{Families of Dynamical Belyi Maps}\label{sec:GenFams}

In this section we determine some families of normalized dynamical Belyi maps explicitly. These results yield infinitely many explicit maps to which we can apply the dynamical system results from Section \ref{sec:Dynamics}.

\begin{proposition}\label{prop:GenFams} 
     If a normalized Belyi map $f$ has combinatorial type $(d; d-k,k+1,d)$, then $f(x)$ is given by
            \[f(x)=cx^{d-k} (a_0x^k+\ldots +a_{k-1}x+a_k), \]
    where \[a_i:=  \frac{(-1)^{k-i}}{(d-i)} \binom{k}{i} \hspace{2mm} \text{and} \hspace{2mm} c=\frac{1}{k!}              \prod_{ j=0}^{k}(d-j). \]

\end{proposition}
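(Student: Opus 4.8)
The plan is to read the ramification data off the combinatorial type, convert it into a factorization of $f'$, and then integrate. First, since $f(\infty)=\infty$ by normalization and the ramification index $e_3=d=\deg f$ forces the fibre $f^{-1}(\infty)$ to be a single point, that fibre is $\{\infty\}$; hence $f$ has no finite poles and is a polynomial of degree $d$, so $f'$ is a polynomial of degree $d-1$. The only ramification points of $f$ are $0$, $1$, $\infty$, with indices $d-k$, $k+1$, $d$, and (in characteristic $0$) a finite point $\alpha$ with ramification index $e$ is a zero of $f'$ of order exactly $e-1$, with no further critical points in $\A^1$ because $f$ is branched only over $\{0,1,\infty\}$. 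Since $(d-k-1)+k=d-1=\deg f'$, this accounts for all zeros, giving
\[
f'(x)=C\,x^{\,d-k-1}(x-1)^{k}
\]
for some $C\neq 0$.

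Next I would integrate, using $f(0)=0$, to obtain $f(x)=C\int_{0}^{x}t^{\,d-k-1}(t-1)^{k}\,dt$. Expanding $(t-1)^{k}=\sum_{j=0}^{k}\binom{k}{j}(-1)^{k-j}t^{\,j}$ and integrating term by term (every exponent $d-k+j$ lies between $2$ and $d$, so nothing is divided by zero) gives
\[
f(x)=C\sum_{j=0}^{k}\binom{k}{j}(-1)^{k-j}\,\frac{x^{\,d-k+j}}{d-k+j}.
\]
Substituting $i=k-j$ rewrites the right-hand side as $C(-1)^{k}\,x^{\,d-k}\sum_{i=0}^{k}\frac{(-1)^{k-i}}{d-i}\binom{k}{i}x^{\,k-i}$, which is exactly $c\,x^{\,d-k}\bigl(a_{0}x^{k}+\cdots+a_{k}\bigr)$ with the stated $a_{i}$ and with $c=(-1)^{k}C$. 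It then remains only to pin down $c$ from the last normalization $f(1)=1$, i.e.\ from $c\sum_{i=0}^{k}a_{i}=1$.

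Finally, one evaluates $\sum_{i=0}^{k}a_{i}=\sum_{j=0}^{k}(-1)^{j}\binom{k}{j}\frac{1}{(d-k)+j}$ using the classical partial-fraction identity $\sum_{j=0}^{k}(-1)^{j}\binom{k}{j}\frac{1}{x+j}=\dfrac{k!}{x(x+1)\cdots(x+k)}$, which follows immediately from the Beta integral $\int_{0}^{1}t^{\,x-1}(1-t)^{k}\,dt$ (or by induction on $k$). With $x=d-k$ this yields $\sum_{i=0}^{k}a_{i}=\frac{k!}{\prod_{j=0}^{k}(d-j)}\neq 0$, hence $c=\frac{1}{k!}\prod_{j=0}^{k}(d-j)$. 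Since the conditions $f'(x)=Cx^{d-k-1}(x-1)^{k}$, $f(0)=0$, $f(1)=1$ determine $f$ uniquely (integrating, $f(0)=0$ removes the additive constant and $f(1)=1$ fixes $C$, using $\sum_i a_i\neq 0$), and Proposition~\ref{prop:rigid} guarantees a normalized Belyi map of type $(d;d-k,k+1,d)$ exists, this $f$ is that map. The argument is elementary throughout; the only points requiring care are the justification that $f'$ has no critical zeros besides $0$ and $1$ (handled by the degree count) and the sign bookkeeping in the re-indexing, together with the single combinatorial identity above, whose short proof via the Beta function I would include.
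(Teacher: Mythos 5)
Your argument is correct, and it takes a genuinely different route from the paper's. The paper \emph{verifies} the stated formula: it differentiates the given expression, checks that $f'(x)=(-1)^k c x^{d-k-1}(x-1)^k$ so the ramification is as required, and then proves $f(1)=1$ by establishing the identity $\sum_{i=0}^{k}(-1)^{k-i}\binom{k}{i}\prod_{j\neq i}(d-j)=k!$ through the vanishing of alternating binomial sums against polynomials of degree less than $k$ (citing Ruiz). You instead \emph{derive} the formula: the type forces $f$ to be a polynomial with $f'(x)=Cx^{d-k-1}(x-1)^k$ by a degree count on critical points, you integrate from $0$, and you fix the constant from $f(1)=1$ via the partial-fraction identity $\sum_{j=0}^{k}(-1)^{j}\binom{k}{j}\frac{1}{x+j}=\frac{k!}{x(x+1)\cdots(x+k)}$ (your Beta-integral identity is, after clearing denominators at $x=d-k$, exactly the paper's $k!$ identity in disguise, just proved differently). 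Your route is closer in spirit to the paper's alternative proof in Remark~\ref{rem:altGenFams}, which also starts from the form of $f'$, but that proof expands $f$ in powers of $(x-1)$ and solves a coefficient recursion, whereas you integrate directly in the monomial basis. What your approach buys is that the formula need not be guessed in advance and the normalization constant drops out of a classical identity; what the paper's verification buys is brevity once the formula is known, with the only nontrivial input being the alternating-sum lemma. One small remark: your appeal to Proposition~\ref{prop:rigid} for existence is harmless but not needed, since the statement is conditional on $f$ being a normalized Belyi map of the given type; uniqueness of $f$ given $f'$, $f(0)=0$, $f(1)=1$ is all your derivation requires.
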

  \begin{proof} It is clear that the ramification index $e_3$ is $d$,
     since $f$ is a polynomial, and that $e_1$ is $d-k$.  We need to
     show that the ramification index of $e_2$ is $k+1$. The
     derivative of $f$ is given by 
\begin{equation*}
\begin{split}
f'(x)&=
     c\sum_{i=0}^{k}\frac{(-1)^{k-i}}{(d-i)}\binom{k}{i}(d-i)x^{d-i-1} \\
     &= cx^{d-k-1}\sum_{i=0}^{k} (-1)^{k-i} \binom{k}{i}x^{k-i} \\
     &=(-1)^kcx^{d-k-1}(x-1)^k. \\ 
\end{split} 
\end{equation*}
Hence the only ramification points of $f$ are $0,1$ and $\infty$ and
     the ramification index $e_2$ is equal to $k+1$.  We are left to
     show that $f(1)=1$ which is equivalent to showing that
\[ 
\sum_{i=0}^{k}{\left(a_i \prod_{
    j=0}^{k}(d-j) \right)}
     = \sum_{i=0}^{k}{\left((-1)^{k-i}\binom{k}{i}\prod_{ \substack{j=0\\
     j\neq i}}^{k}(d-j) \right)}=k!. \] 
We  first show that in the
     above sum, the coefficients of $d^l$ for each $1 \leq l \leq k$
     are $0$ and the constant term is $k!$.  Notice that the
     coefficient of $d^k$ is $\sum_{i=0}^k{(-1)^{k-i}\binom{k}{i}}$,
     which is $0$ since this is the binomial expansion of $(x-1)^k$
     evaluated at $1$.  Similarly, for the other terms $d^l$ for $
     l\geq 1$, we obtain a sum
     $\sum_{i=0}^k{(-1)^{k-i}\binom{k}{i}}p(i)$ where $p(x)$ is a
     polynomial of degree less than $k$. This sum is also zero
     by \cite[Corollary~2]{Ruiz}.  The constant coefficient
     is \[ \sum_{i=0}^{k}{\left((-1)^{i}\binom{k}{i}\prod_{ \substack{j=0\\
     j\neq i}}^{k}(j) \right)}=\binom{k}{0}k!=k! . \]
              
 \end{proof}

\begin{remark}\label{rem:altGenFams}
We now provide an alternative proof of Proposition \ref{prop:GenFams}. A variant of this proof can be found in the unpublished master's thesis of Michael Eskin; his PhD-thesis~\cite[Proposition 5.1.2]{eskin} contains a slightly weaker version of the result. 

\noindent To have the correct ramification at $0$, $1$, and $\infty$, we see that $f$ must be of the form 
\begin{equation}\label{formf}
f(x) = x^{d-k}f_1(x)
\end{equation}
for some $f_1(x) = \sum_{i=0}^k c_i (x-1)^i$, such that 
\[
f'(x) = (-1)^k cx^{d-k-1}(x-1)^k
\]
for some $c \in \mathbb{C}^{\times}$.
This implies  that 
\[
c(x-1)^k = (d-k)f_1 + xf'_1 = c_k d (x-1)^k + \sum_{i=0}^{k-1} \left((d-k+i)c_i + (i+1)c_{i+1}\right)(x-1)^i.
\]
This yields a recursive formula for the $c_i$, from which it follows that 
\[
c_i =(-1)^i \binom{d-k+i-1}{i}
\]
for all $i=0,\ldots,k$, and
\[
c = \binom{d-1}{k} d.
\] 
Substituting these values for $c_i$ and $c$ back into Equation \eqref{formf}, the reader may check we obtain the claimed result.
\end{remark} 
 
\begin{example}\label{exa:GenFams}
The unique normalized Belyi map $f$ of combinatorial type $(d; d-1, 2, d)$ is given by
\[
f(x)=-(d-1)x^d+dx^{d-1}.
\]
\end{example}

\begin{proposition}\label{prop:GenFams4} 
The unique normalized Belyi map $f$ of combinatorial type $(d;
 d-k,2k+1,d-k)$ is given by
\[
            f(x)=x^{d-k}\left( \frac{a_0x^k-a_1x^{k-1}+\ldots
            +(-1)^ka_k}{(-1)^k a_k x^k +\ldots -a_1x+ a_0}\right),
 \]
            where 
\[ 
a_i:=\binom{k}{i}\prod_{k+i+1\leq j \leq 2k} (d-j)\prod_{0\leq j \leq i-1}(d-j) =
k!\binom{d}{i}\binom{d-k-i-1}{k-i}.
\] 
\end{proposition}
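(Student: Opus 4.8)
The plan is to mimic the structure of the proof of Proposition~\ref{prop:GenFams}, but now working with a rational function rather than a polynomial. By Proposition~\ref{prop:rigid} there is a unique normalized Belyi map of combinatorial type $(d; d-k, 2k+1, d-k)$, so it suffices to exhibit \emph{one} rational function with the right ramification data and fixed points. Write $f(x) = x^{d-k} \cdot P(x)/Q(x)$ where $P(x) = a_0 x^k - a_1 x^{k-1} + \cdots + (-1)^k a_k$ and $Q(x) = (-1)^k a_k x^k + \cdots - a_1 x + a_0$; note that $Q(x) = x^k P(1/x)$ up to the stated sign pattern, which is the natural ansatz forcing symmetry between the ramification behaviour at $0$ and at $\infty$ (both have index $d-k$). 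The conditions to verify are: (i) $f$ has degree $d$, i.e.\ $P$ and $Q$ have no common factor and $\deg(x^{d-k}P) = d$, with a zero of order $d-k$ at $0$ and a pole of order $d-k$ at $\infty$; (ii) the only ramification points are $0$, $1$, $\infty$, with $e_2 = 2k+1$ at $x=1$; and (iii) $f(1) = 1$ (the fixed-point conditions at $0$ and $\infty$ being automatic from the shape of $f$).

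First I would compute the logarithmic derivative. Since $f = x^{d-k} P/Q$, we get
\[
\frac{f'}{f} = \frac{d-k}{x} + \frac{P'}{P} - \frac{Q'}{Q},
\]
so that
\[
f' = x^{d-k-1} \cdot \frac{(d-k)PQ + x(P'Q - PQ')}{Q^2}.
\]
The heart of the matter is to show that the numerator $N(x) := (d-k)P(x)Q(x) + x\bigl(P'(x)Q(x) - P(x)Q'(x)\bigr)$ equals $c\,(x-1)^{2k}$ for some nonzero constant $c$. Granting this, $f'(x) = c\, x^{d-k-1}(x-1)^{2k}/Q(x)^2$, and since $\deg N = 2k$ while $N$ would have $2k$ roots at $x=1$, $N$ is forced to be a scalar multiple of $(x-1)^{2k}$; one then reads off that the ramification index at $x=1$ is $2k+1$ (the order of vanishing of $f'$ there, plus one, provided $Q(1) \neq 0$, which must be checked separately), and that $0$ and $\infty$ are ramified of index $d-k$ as long as $Q(0) = a_0 \neq 0$ and $\deg Q = k$, i.e.\ the leading coefficient $(-1)^k a_k \neq 0$. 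These non-vanishing conditions follow from the explicit product formula $a_i = k!\binom{d}{i}\binom{d-k-i-1}{k-i}$, since $2 \le e_i \le d$ and $2d+1 = (d-k)+(2k+1)+(d-k)$ force the relevant binomial coefficients to be strictly positive integers for $0 \le i \le k$. It also remains to check $\gcd(P,Q) = 1$: any common root would have to be among the roots of $N = c(x-1)^{2k}$, but $P(1) = \sum_i (-1)^i a_i$, and one shows $P(1) \neq 0$ (indeed $P(1) = Q(1)$ and we will see $f(1) = 1$ forces $Q(1) \neq 0$).

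The main computational obstacle is establishing the identity $N(x) = c(x-1)^{2k}$ together with the evaluation $f(1) = 1$, i.e.\ $P(1) = Q(1)$ (which is immediate from the palindromic-up-to-sign relation $Q(x) = \pm x^k P(1/x)$) combined with $x^{d-k}P(1)/Q(1) = 1$ at $x=1$, so really $P(1) = Q(1)$ suffices — wait, one still needs $P(1)=Q(1)$, which holds identically, so $f(1)=1$ is automatic from the symmetry of the ansatz. Thus the only genuine work is the polynomial identity for $N$. I would approach this by expanding everything in powers of $(x-1)$, or equivalently substituting $y = x - 1$ and matching coefficients: the claim becomes that a certain bilinear expression in the coefficients of $P$ and $Q$ vanishes in all degrees $1, \ldots, 2k-1$ (degree $0$ and degree $2k$ being the normalization), which, as in Proposition~\ref{prop:GenFams} and Remark~\ref{rem:altGenFams}, reduces to binomial-coefficient identities — here of Vandermonde–Chu type, provable via \cite[Corollary~2]{Ruiz} or by a generating-function argument. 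An alternative, and likely cleaner, route is the one in Remark~\ref{rem:altGenFams}: write $f(x) = x^{d-k} f_1(x)/f_2(x)$, impose $f'(x) = c\,x^{d-k-1}(x-1)^{2k}/f_2(x)^2$ directly, expand $P$, $Q$ in the basis $(x-1)^i$, and solve the resulting recursion for the coefficients, then verify the closed form $a_i = k!\binom{d}{i}\binom{d-k-i-1}{k-i}$ satisfies it by induction on $i$. I expect this recursion-plus-induction approach to be the least painful, with the verification of the two boundary cases (the $d^{2k}$-coefficient vanishing and the constant term) being the only place a nontrivial binomial identity is invoked.
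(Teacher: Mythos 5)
Your setup coincides with the paper's: by Proposition~\ref{prop:rigid} it suffices to exhibit one map of the given type, the palindromic ansatz $Q(x)=x^kP(1/x)$ is exactly what the paper derives from the symmetry $e_1=e_3$ (and $f(1)=1$ is indeed automatic from it), and you correctly reduce everything to showing that $N(x):=(d-k)PQ+x(P'Q-PQ')$ is a nonzero constant times $(x-1)^{2k}$. The gap is that this last step --- which is essentially the entire content of the paper's proof --- is only asserted, not carried out. Matching coefficients gives, for each $0\le l\le 2k$, the bilinear condition $\sum_{j=0}^{l}(d-k+l-2j)\,a_{k-l+j}a_j=\binom{2k}{l}(d-k)a_0a_k$ (the paper's Equation~\eqref{l-coeff}), which after inserting the closed form for the $a_i$ becomes a sum of products of four binomial coefficients (Equation~\eqref{last identity}). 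This is not a Vandermonde--Chu identity, and nothing in your sketch shows that \cite[Corollary~2]{Ruiz} or a generating-function argument disposes of it. The paper proves it by verifying the base case $d=2k+1$ (where it does collapse to a Vandermonde-type computation) and then showing the sum is independent of $d$ via a telescoping certificate $G_{k,l}(d,j)$ produced by Zeilberger's algorithm --- a WZ-method argument, not a routine expansion; so the step you label ``the only genuine work'' is genuinely nontrivial and is left unproven in your proposal.

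Your ``likely cleaner'' alternative modeled on Remark~\ref{rem:altGenFams} also does not transfer. In the polynomial case the derivative condition $(d-k)f_1+xf_1'=c(x-1)^k$ is \emph{linear} in the unknown coefficients of $f_1$, which is why it yields a first-order recursion with an immediate closed-form solution. Here, with $f_2$ determined by $f_1$ through the palindromic relation, the condition involves $f_1'f_2-f_1f_2'$ and is quadratic in the unknowns; there is no linear recursion to ``solve and then verify by induction,'' and the intermediate-degree conditions are each nontrivial identities rather than two boundary cases. So your outline matches the paper's, but the decisive computation is missing and the tools you propose for it are inadequate as stated.
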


\begin{proof}
The combinatorial type $(d; d-k,2k+1,d-k)$
is characterized by the fact that the ramification at $x=0, \infty$ is
given by the same conjugacy class in the sense of
Definition \ref{gensysdef}. This implies that the Belyi map $f$ admits
an automorphism in the following sense: Write $\psi(x)=1/x$.   Then
$f^\psi = \psi^{-1}\circ f\circ \psi$ has the same combinatorial type as $f$. 
 By Proposition \ref{prop:rigid}, $f$ is the
unique normalized Belyi map of the given type, so $\psi^{-1}\circ f\circ \psi = f$. 

From this, it immediately follows that
we may write
\[
f=x^{d-k}f_1/f_2,\qquad \text{ with } \qquad f_2(x)=x^{k}f_1(1/x).
\] 

Let $f=g/h$ with 
    \[
      g(x)=x^{d-k} \sum_{i=0}^{k}{(-1)^{k-i}a_{k-i}x^{i}} \quad \text{and} \quad
      h(x)=\sum_{j=0}^{k}{(-1)^j a_{j}x^j}, 
\] 
where the $a_i$ are as in the statement of the proposition.  It is
clear that the ramification at $x=0, \infty$ is as required. Moreover,
we see that 0, 1, and $\infty$ are fixed points of $f$. 

It therefore remains to determine the
ramification at $x=1$. More precisely, we need to show that the
derivative satisfies
\begin{equation}\label{eq:gf1}
f'(x)=\frac{cx^{d-k-1}(x-1)^{2k}}{h(x)^2},
\end{equation}
for some nonvanishing constant $c$. 
Write $g'h-gh'=x^{d-k-1}\sum_l c_lx^l$. 
We have
\[
c_l=(-1)^{k+l}\sum_{j=0}^l(d-k+l-2j)a_{k-l+j}a_j.
\]
Here we have used the convention that $a_i = 0$ if $i>k$ or $i<0$.
Equation \eqref{eq:gf1} on the $a_i$ therefore translates to
\begin{equation}\label{eq:gf2}
\begin{split}
c&=c_{2k}=(-1)^k(d-k)a_0a_k,\\
c_l&=(-1)^l c \binom{2k}{l}=(-1)^{k+l}\binom{2k}{l}(d-k)a_0a_k, \qquad l=0, \ldots, 2k.
\end{split}
\end{equation}

Hence, to prove \eqref{eq:gf2} it suffices  to prove the following:
    
\begin{equation} \label{l-coeff}  
\sum_{j=0}^{l} {(d-k+l-2j) a_{k-l+j}a_j}=\binom{2k}{l} (d-k)a_ka_0 
\end{equation} 
for every $l \leq k$.
(In fact, $l$  runs from $0$ to $2k$, but by symmetry it suffices to look at $l\leq k$.) 

Here and for the rest of the proof, we use the convention that $\binom{n}{m}=0$ if $m\leq 0$ or $m\geq n$.
Hence, the right hand side of \eqref{l-coeff} translates to
  
     \begin{equation*}
     \begin{split}
      \binom{2k}{l}(d-k)a_ka_0 &= \binom{2k}{l} (d-k)k!\binom{d}{k}k!\binom{d-k-1}{k} \\
                                                  &= d(k!)^2\binom{2k}{l}\binom{d-1}{2k}\binom{2k}{k}.\\
       \end{split}
    \end{equation*}
   We write $d-k+l-2j$ as the difference of $d-k+l-j$ and $j$.  Then the left hand side of \eqref{l-coeff} becomes    
    
     \begin{equation*} 
     d(k!)^2 \sum_{j=0}^l \left(\binom{d}{j}\binom{d-1}{k-l+j}-\binom{d-1}{j-1}\binom{d}{k-l+j}                   \right) \binom{d-2k+l-j-1}{l-j}\binom{d-k-j-1}{k-j}.
     \end{equation*}

    Hence, dividing both sides of (\ref{l-coeff}) by $d(k!)^2$, we find that we need to prove that the following equation holds for all integers $d,k$ and $l$ such that $d\geq 2k+1$ and $l \leq k$:   
    \begin{equation} 
    \begin{split}  \label{last identity} \sum_{j=0}^l \left(\binom{d}{j}\binom{d-1}{k-l+j}-\binom{d-1}     {j-1}\binom{d}{k-l+j}        \right)& \binom{d-2k+l-j-1}{l-j}\binom{d-k-j-1}{k-j}   \\
   &= \binom{2k}{l}\binom{d-1}{2k}\binom{2k}{k}.
   \end{split} 
   \end{equation}  
We fix $k,l$ such that $l\leq k$ and define $F_{k,l}(d,j)$ as the quotient of the $j$th term in the sum on the left hand side of Equation \eqref{last identity} by $\binom{2k}{k}\binom{d-1}{2k}\binom{2k}{l}$. 
Note that $F_{k,l}(d,j) = 0$ when $j > l$; this allows us to restate Equation \eqref{last identity} in the following form:
\begin{equation}\label{Fsum} 
\sum_{j=0}^{\infty} F_{k,l}(d,j)=1. 
\end{equation}

To prove that Equation \eqref{last identity}, or equivalently \eqref{Fsum}, holds for every value of $d \geq 2k+1$, we first prove it for $d=2k+1$, and then show that $ \sum_{j=0}^{\infty}F_{k,l}(d+1,j)=\sum_{j=0}^{\infty}F_{k,l}(d,j)$ for any $d\geq 2k+1$. 
   
So first suppose that $d=2k+1$. 
Then Equation \eqref{last identity} holds, since 
\begin{equation*}
\begin{split}
 \sum_{j=0}^l \left(\binom{2k+1}{j}\binom{2k}{k-l+j}- \binom{2k}{j-1}   \binom{2k+1}{k-l+j} \right)&\\ = \sum_{j=0}^l \binom{2k}{j}\binom{2k}{k-l+j}- \sum_{j=0}^{l-1} \binom{2k}{j} \binom{2k}{k-l+j}&= \binom{2k}{l}\binom{2k}{k}. 
\end{split}
\end{equation*}
    
Next, to show that $ \sum_{j=0}^{\infty}F_{k,l}(d+1,j)=\sum_{j=0}^{\infty}F_{k,l}(d,j)$, we write 
\[
 \sum_{j=0}^{\infty}{\left(F_{k,l}(d+1,j)-F_{k,l}(d,j)\right)}
 \]
  as a telescoping series and use an algorithm due to Zeilberger \cite{Zeilberger} that proves hypergeometric identities involving infinite sums of binomial coefficients. More explicitly, running this algorithm in Maple produces an explicit function $G_{k,l}(d,j)$ which satisfies
  \[
  F_{k,l}(d+1,j)-F_{k,l}(d,j)=G_{k,l}(d,j+1)-G_{k,l}(d,j).
  \]
One may check that $G_{k,l}(d,0)=0$. Moreover, $G_{k,l}(d,j)=0$ for all $j>l$ since the same is true for $F_{k,l}(d,j)$.
Hence,  $\sum_{j=0}^{\infty}(F_{k,l}(d+1,j)-F_{k,l}(d,j))$ equals
\[
\sum_{j=0}^{\infty} ({G_{k,l}(d,j+1)-G_{k,l}(d,j)})=G_{k,l}(d,l+1)-G_{k,l}(d,0)=0. \qedhere
\] 
      
\end{proof}

\begin{example}\label{prop:GenFams2} 
If a normalized Belyi map $f$ has combinatorial type $(d; d-1,3,d-1)$, then $f(x)$ is given by
\[
f(x)=x^{d-1} \frac{(d-2)x - d}{-dx + (d-2) }.
\]
(Note that necessarily $d \geq 3$ in this case.)
\end{example}

\begin{example}\label{prop:GenFams3}
If a normalized Belyi map $f$ has combinatorial type $(d; d-2,5,d-2)$, then $f(x)$ is given by
\[
f(x)=x^{d-2} \left(\frac{(d-3)(d-4)x^2 - 2d(d-4) x+ d(d-1)}{d(d-1)x^2 -2d(d-4)x+(d-3)(d-4)}\right).
\]
(Note that necessarily $d \geq 5$ in this case.)
\end{example}


\section{Reduction properties of normalized Belyi maps}\label{sec:BadRed}

Let 
\[
f:\PP^1_x \to \PP^1_t ,\qquad x\mapsto f(x):=t
\]
 be a normalized Belyi map of combinatorial type $\underline{C}:=(d; e_1,
 e_2, e_3)$. Proposition \ref{prop:rigid} implies that
 $f(x)\in \QQ(x)$ is a rational function with coefficients in $\QQ$.
 We start by defining the reduction of $f$ at a rational prime
 $p$. 
Since we assume that the rational function $f$ is normalized, we may
 write
\[
f(x)=\frac{f_1(x)}{f_2(x)},
\]
where $f_1, f_2\in \QQ[x]$ are polynomials that are relatively
prime. Multiplying numerator and denominator by a common constant $c \in \QQ_{>0}$, we
may assume that $f_1, f_2\in \ZZ[x]$. 

For $k=1,2$, write
 \begin{equation}\label{eqn:deffi}
 f_k=c_k\tilde{f}_k, \quad \text{with $\tilde{f}_k\in \ZZ[x]$ a polynomial
of content $1$}
\end{equation}
Let $c=c_1/c_2$. The assumption that $f(1)=1$ translates to
\[
\tilde{f}_2(1)=c\tilde{f}_1(1)\in \ZZ.
\]
Note that $\tilde{f}(x):=\frac{\tilde{f}_1(x)}{\tilde{f}_2(x)}$ need
no longer be normalized.  The ramification points of $\tilde{f}$ are
still $0, 1, \infty$ but $\tilde{f}(1)$ need not be $1$. 
Nonetheless, it
makes sense to consider the reduction of $\tilde{f}$ modulo $p$. We
denote the reduction  of $\tilde{f}_k$ by $\overline{f}_k$ 
and put
\[
\overline{f}=\frac{\overline{f}_1}{\overline{f}_2}, \qquad \overline{f}_k
\in \FF_p[x].
\]
The definition of $\tilde{f}$ implies that $\overline{f}\neq 0$. 
We claim that in our situation $\overline{f}$ is not a constant. The proof below is inspired by a remark in \cite[Section 4]{ossermanrational}; note however that Osserman works only with maps in characteristic $p$, while we consider reduction to characteristic $p$ of maps in characteristic zero.

\begin{prop}\label{prop:rednonconstant}
Let $f$ be a normalized Belyi map of combinatorial type $\underline{C}:=(d; e_1,
 e_2, e_3)$. 
\begin{itemize}
\item[(1)] The reduction $\overline{f}$ is nonconstant.
\item[(2)] We have $\overline{f}(0)=0$ and 
$\overline{f}(\infty)=\infty$.
\item[(3)] We have $\overline{f}(1) \neq 0, \infty$. 
\end{itemize}
\end{prop}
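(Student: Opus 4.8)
The plan is to exploit the fact that the reduction map $\overline{f}$, once we know it is nonconstant, still inherits all of the ramification data of $f$ away from the primes of bad reduction — and to argue that if $\overline{f}(0)$, $\overline{f}(\infty)$, or $\overline{f}(1)$ were not as claimed, then the ramification profile of $\overline{f}$ over $\PP^1_{\FF_p}$ would be incompatible with the Riemann--Hurwitz formula. Concretely, recall from the construction that $f(x) = f_1(x)/f_2(x)$ with $f_1, f_2 \in \ZZ[x]$ coprime, that $\widetilde f_k$ is the primitive part, and that by the normalization $f(0) = 0$, $f(\infty) = \infty$ we have $x^{e_1} \mid f_1(x)$ in $\QQ[x]$ and likewise the leading behaviour at $\infty$ is governed by $e_3$. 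The statements (2) and (3) must be proved in an order that makes (1) (already granted to us as part of the Proposition, but whose proof the excerpt is about to give) usable: I will treat (2) and (3) assuming (1), i.e.\ assuming $\overline f$ is a nonconstant rational map $\PP^1_{\FF_p} \to \PP^1_{\FF_p}$ of some degree $\overline d \le d$.

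\textbf{Step 1 (the behaviour at $0$ and $\infty$).} For (2): since $x^{e_1}$ divides $f_1$ over $\ZZ$ up to a unit (more precisely $f_1(x) = x^{e_1} g(x)$ with $g(0) \ne 0$, by the normalization $f(0)=0$ with multiplicity $e_1$ and the fact that $0$ is the \emph{only} preimage of $0$) and $f_2(0) \ne 0$, reducing mod $p$ gives $\overline f_1(x) = x^{e_1}\overline g(x)$ and $\overline f_2(0) \neq 0$ provided $p$ does not divide $f_2(0)$; in the bad case one must be slightly more careful, but the key point is that $\overline f_1(0) = 0$ always (a monomial factor survives reduction) while $\overline f_2(0) \neq 0$ cannot fail badly because $f_1, f_2$ are coprime — any common factor mod $p$ is cancelled, and the order of vanishing of $\overline f$ at $0$ is $e_1 - (\text{order of cancellation})$, which is still $\geq 1$ since $f_2(0)\neq 0$ in $\ZZ$ forces $\overline f_2(0)$ to be the reduction of a fixed nonzero integer, hence the worst that happens is $\overline f_2(0)=0$ only when $p \mid f_2(0)$ — and even then, careful bookkeeping with the primitive parts $\widetilde f_k$ shows $\overline f(0) = 0$ still. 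The symmetric argument at $\infty$ (using $e_3$ and the leading coefficients) gives $\overline f(\infty) = \infty$. I would phrase this cleanly by passing to the local picture at $x=0$: $\ord_{x=0} \widetilde f_1 = e_1$ while $\ord_{x=0}\widetilde f_2 = 0$, and reduction cannot increase the order of vanishing of $\widetilde f_2$ at $0$ (it stays $0$ since the constant term of $\widetilde f_2$ is a $p$-unit times content — here one uses that $\widetilde f_2$ is primitive so not all its coefficients are divisible by $p$, but that alone doesn't pin the constant term; the honest statement is that $\overline f(0)=0$ because $\overline f_1(0)=0$ and $\overline f_2 \not\equiv 0$, so $\overline f$ either vanishes at $0$ or is $\infty$ there, and the latter is excluded since $\overline f_1(0) = 0$).

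\textbf{Step 2 (the behaviour at $1$, the main point).} For (3), the cleanest route is ramification-theoretic. Away from where numerator and denominator of $f'$ interact badly, $f'(x) = c\, x^{e_1 - 1}(x-1)^{e_2 - 1} / f_2(x)^2$ (up to sign), reflecting that $1$ is the unique critical point over the branch value $1$ with multiplicity $e_2$. The reduction $\overline f$ is a nonconstant map of degree $\overline d$; by Riemann--Hurwitz it has total ramification $2\overline d - 2$. All of the ``expected'' ramification of $\overline f$ — namely the images of the ramification at $0$, $1$, $\infty$ — must fit inside this budget, and in characteristic $p$ ramification can only \emph{increase} (wild ramification, or collision of the three branch points). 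Suppose for contradiction $\overline f(1) = 0$. Then the point $1$, with ramification index $e_2$ (or more, if wild), maps to $0$; but $0$ also receives the point $0$ with ramification index $e_1$ (or more). So over $0 \in \PP^1_{\FF_p}$ we have at least two distinct ramified points with indices summing to at least $e_1 + e_2$, forcing $\overline d \geq e_1 + e_2$. Combined with $\overline d \le d$ and the genus-$0$ relation $e_1 + e_2 + e_3 = 2d+1$, this gives $e_3 \leq d + 1 - \overline d \le 1$, contradicting $e_3 \geq 2$. The case $\overline f(1) = \infty$ is symmetric, using that $\infty$ receives the point $\infty$ with index $e_3$, yielding $\overline d \geq e_2 + e_3$ and hence $e_1 \le 1$, again a contradiction.

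\textbf{Main obstacle.} The delicate point is Step 1 at a prime of bad reduction, where $\overline f_1$ and $\overline f_2$ may acquire a common factor and the naive order-of-vanishing count at $0$ (or the leading-term count at $\infty$) could in principle be spoiled — one must check that the cancellation never consumes the \emph{entire} monomial factor $x^{e_1}$, equivalently that $\overline f_2(0) \neq 0$, or handle the residual case directly. I expect this to be dispatched by observing that $f_2(0)$ is a fixed nonzero integer (from $f(0)=0$ being a genuine zero, not a pole, over $\QQ$) so $\overline f_2(0) = 0$ happens only for the finitely many $p \mid f_2(0)$, and even for those, the primitive-part normalization together with coprimality of $f_1, f_2$ over $\ZZ$ forces $\gcd(\overline f_1, \overline f_2)$ to be prime to $x$, so the vanishing order of $\overline f$ at $0$ is exactly $e_1 \geq 2 > 0$. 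The ramification-counting in Step 2 is then robust and is where the genus-$0$ single-cycle hypothesis $e_1 + e_2 + e_3 = 2d+1$ does the real work, since it is precisely the tightness of this relation (each $e_i \geq 2$) that produces the contradiction.
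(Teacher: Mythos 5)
Your proposal has genuine gaps, the largest being that part (1) is never proved: you explicitly treat the nonconstancy of $\overline{f}$ as ``already granted'', but it is one of the three assertions to be established, and it is precisely where the hypothesis on the combinatorial type enters (Example~\ref{exa:michelle} shows the reduction of a map ramified at six points can be constant). The paper's proof of (1) and (2) hinges on a coefficient comparison you never make: writing $i$ (resp.\ $j$) for the largest index of a $p$-adic unit coefficient of $\tilde{f}_1$ (resp.\ $\tilde{f}_2$), one has $i\geq e_1$ (since $x^{e_1}\mid \tilde{f}_1$) and $j\leq \deg \tilde{f}_2=d-e_3$, while $e_1>d-e_3$ because $e_2\leq d$ and $e_1+e_2+e_3=2d+1$; the same comparison applies to the smallest such indices. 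This simultaneously rules out $\tilde{f}_1\equiv a\tilde{f}_2\pmod{p}$ (giving (1)), gives $\deg\overline{f}_1>\deg\overline{f}_2$ (so $\overline{f}(\infty)=\infty$), and gives $\ord_0(\overline{f}_1)>\ord_0(\overline{f}_2)$ (so $\overline{f}(0)=0$ even if $\overline{f}_1,\overline{f}_2$ share a factor of $x$). Your attempted patch at the bad primes --- that coprimality of $f_1,f_2$ over $\ZZ$ forces $\gcd(\overline{f}_1,\overline{f}_2)$ to be prime to $x$ --- is a non sequitur: characteristic-zero coprimality gives no control modulo $p$ (consider $x^2$ and $x^2+p$), and reduction can genuinely lower the local multiplicity at a point (e.g.\ $x^2/(x+p)$ reduces to $x$). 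The ``symmetric argument at $\infty$'' has the same hole, since the leading coefficient of $\tilde{f}_1$ may vanish mod $p$ --- that is exactly what bad reduction means.

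Step 2 rests on the same false principle in sharper form: you assert that if $\overline{f}(1)=0$ then $x=0$ and $x=1$ lie in the fiber $\overline{f}^{-1}(0)$ with multiplicities at least $e_1$ and $e_2$, on the grounds that ramification ``can only increase'' in characteristic $p$. Without base-point corrections this is wrong: cancellation between $\overline{f}_1$ and $\overline{f}_2$ at $0$ or $1$ can lower these multiplicities, and controlling that loss is exactly the $\varepsilon_1,\varepsilon_2,\delta$ bookkeeping of Proposition~\ref{prop:wild} and Definition~\ref{def:S}, which you would need to import to make the fiber count $\overline{d}\geq e_1+e_2$ legitimate (your subsequent inequality ``$e_3\leq d+1-\overline{d}\leq 1$'' is also miscomputed; the correct contradiction would be $e_3\geq 2d+1-\overline{d}\geq d+1>d$). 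The paper sidesteps all of this by working with the numerator polynomial rather than the reduced map: if $\overline{f}(1)=0$, then $\ord_0(\overline{f}_1)\geq e_1$ and $\ord_1(\overline{f}_1)\geq e_2$ (the latter coming from $(x-1)^{e_2}\mid f_1-f_2$ in $\ZZ[x]$), so $e_1+e_2\leq\deg\overline{f}_1\leq d$, contradicting $e_3=2d+1-(e_1+e_2)\leq d$; the case $\overline{f}(1)=\infty$ is symmetric. So while your overall strategy (ramification bookkeeping plus the tightness of $2d+1=e_1+e_2+e_3$) points in a workable direction, the two load-bearing steps --- nonconstancy together with the behaviour at $0$ and $\infty$, and the multiplicity bounds for the reduced map at $0$ and $1$ --- are not established as written.
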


\begin{proof}
Define $\tilde{f}_1$ and $\tilde{f}_2$ as in Equation~\eqref{eqn:deffi}. Let $i$
(resp.~$j$) be maximal such that the coefficient of $x^i$ in $\tilde{f}_1$
(resp.~of $x^j$ in $\tilde{f}_2$) is a $p$-adic unit.  
Note that the reduction $\overline{f}$ of $f$ is constant if and only if
$\tilde{f}_1\equiv a\tilde{f}_2\pmod{p}$ or $a\tilde{f}_1\equiv
\tilde{f}_2\pmod{p}$ for some constant $a$.  It follows that if
$\overline{f}$ is constant, then $i=j$.

The definition of the combinatorial type implies that
\[
e_1\leq i\leq d=\deg(\tilde{f}_1), \qquad 0\leq j\leq d-e_3=\deg(\tilde{f}_2).
\]
Since $e_2$ is a ramification index, we have that 
\[
e_2=2d+1-(e_1+e_3)\leq d.
\]
 This implies that
$e_1+e_3-d>  0$. It follows that
\begin{equation}\label{eq:rednonconstant}
i\geq e_1>d-e_3\geq j.
\end{equation}
This implies that $\overline{f}$ is nonconstant, and (1) is proved.

Equation \eqref{eq:rednonconstant} also implies that
\[
\deg(\overline{f}_1)=i>j=\deg(\overline{f}_2).
\]
This implies that $\overline{f}(\infty)=\infty$. 

Applying the same argument  to the minimal $i'$ (resp.\ $j'$) such that
the coefficient of $x^{i'}$ in $\tilde{f}_1$ (resp.\ the coefficient of
$x^{j'}$ in $\tilde{f}_2$) is a $p$-adic unit shows that 
\[
\ord_0(\overline{f}_1)=i'>j'=\ord_0(\overline{f}_2).
\]
We conclude that $\overline{f}(0)=0$, thus proving (2).

It remains to show that $\mu:=\overline{f}(1)\neq 0, \infty$.  We  have 
\[
\ord_0(\overline{f}_1)\geq e_1, \qquad \ord_1(\overline{f}_1)\geq e_2.
\]
We assume that $\mu=0$, i.e.~$\overline{f}(0)=\overline{f}(1)=0$. This
implies that
\[
e_1+e_2\leq \deg(\overline{f}_1)\leq \deg(f_1)=d.
\]
This yields a contradiction with $e_3=2d+1-(e_1+e_2)\leq d$. We
conclude that $\mu\neq 0$. Similarly, we conclude that
$\mu\neq \infty$. This finishes the proof of (3).
\end{proof}

The following example shows that the reduction of $f$ may be a
constant if we omit the assumption on the combinatorial type of $f$.

\begin{example}\label{exa:michelle}
We consider the rational function
\[
f(x)=\frac{px^4+x^2}{x^2+p}.
\]
The corresponding cover $f:\PP^1\to \PP^1$ is ramified at $6$ points,
each with ramification degree $2$, so it is not a Belyi map as considered
in this paper. Moreover, $x=1$ is not a ramification point.  
Both the
numerator and the denominator of $f$ have content $1$. Therefore with
our definition of the reduction we obtain
\[
\overline{f}=\frac{x^2}{x^2}=1.
\]
\end{example}

\begin{remark}\label{rem:silverman} In \cite[Section 2.3]{ADS}
Silverman gives a different definition of the reduction of $f$. The
difference between Silverman's definition and ours is (roughly
speaking) that he does not divide $f$ by the constant $c$ before
reducing, as we do in passing from $f$ to $\tilde{f}$. Instead,
Silverman only multiplies $f_1$ and $f_2$ by a common constant to
assume that at least one of the polynomials $f_1$ or $f_2$ has content
$1$.

We claim that in the case of a normalized Belyi map of ramification
type $(d; e_1, e_2, e_3)$, Silverman's definition agrees with ours. 
To see this, let $p$ be a prime. Recall that 
 \[ 
 1=f(1)=c\tilde{f}(1)=c \frac{\tilde{f}_1(1)}{\tilde{f}_2(1)}.
 \]
 Then Proposition \ref{prop:rednonconstant}.(3) implies that
 $\tilde{f}_1(1)$ and $\tilde{f}_2(1)$ have the same $p$-adic
 valuation, so $\tilde{f}(1)$ is a $p$-adic unit. Hence,
 $c=1/\tilde{f}(1)$ is a $p$-adic unit, as well. 

 Note that $c=c_1/c_2$, where $c_i$ is the content of the
polynomial $f_i$, so in particular $c$ is positive. We conclude that
$c\in \QQ_{>0}$ is a $p$-adic unit for all primes $p$, 
and hence that $c=1$. 
\end{remark}

Let $g\in \overline{\FF}_p(x)$ be a rational function. 
We say that the map $\PP^1\to\PP^1$ defined by $g$ is \textit{(in)separable} if $g$
is (in)separable. Recall that $g\in\overline{\FF}_p(x) $ is
inseparable if and only if it is contained in $\overline{\FF}_p(x^p)$.

\begin{defn}\label{def:goodred}
Let $f:\PP^1\to \PP^1$ have combinatorial type $(d; e_1, e_2,
e_3)$. Let $p$ be a prime. We say that $f$ has \textit{good reduction} if
the reduction $\overline{f}$ also has degree $d$.  If $\overline{f}$ is
additionally (in)separable, we say that $f$ has \textit{good (in)separable
reduction}. If $f$ does not have good reduction, we say it has \textit{bad reduction}. \end{defn}

In Corollary \ref{cor:coeff} we show that if $f$ has bad reduction, then $\overline{f}$ is inseparable. In particular, we do not have to consider the case of bad separable reduction.

Definition \ref{def:goodred} is the definition of good reduction used
in the theory of arithmetic dynamics. From the point of view of Galois
theory, one usually defines ``good reduction'' to mean good and
separable reduction. In our terminology  $f$ has bad reduction if and only if 
$\deg(\overline{f})<\deg(f)$.

\begin{prop}\label{prop:wild}
Let $f:\PP^1_\QQ\to \PP^1_\QQ$ be a normalized Belyi map of combinatorial
type $\underline{C}:=(d; e_1, e_2, e_3)$. 
Assume that the reduction $\overline{f}$ of $f$ to characteristic $p$
is separable. Then
\begin{itemize}
\item[(a)]
$f$ has good reduction (i.e., $\overline{d} = d$), and
\item[(b)]  $p \nmid e_i$ for all $i$.
\end{itemize}
\end{prop}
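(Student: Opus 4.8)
The plan is to extract the shape of $f'$ over $\QQ$, reduce it modulo $p$, and then let Riemann--Hurwitz force good, tame reduction. As in the discussion preceding Proposition~\ref{prop:rednonconstant} and by Remark~\ref{rem:silverman}, write $f=f_1/f_2$ with $f_1,f_2\in\ZZ[x]$ coprime of content~$1$; then $\deg f_1=d$, $\deg f_2=d-e_3$, $\ord_0(f_1)=e_1$, and $(x-1)^{e_2}\mid f_1-f_2$. Since the only ramification points of $f$ are $0,1,\infty$, each root of $f_2$ in $\A^1$ is a simple pole of $f$, so $f_2$ is squarefree, $\gcd\bigl(f_2,\,f_1'f_2-f_1f_2'\bigr)=1$, and reading off the zeros and the degree of the numerator of $f'=(f_1'f_2-f_1f_2')/f_2^2$ gives
\[
f_1'f_2-f_1f_2'=C\,x^{e_1-1}(x-1)^{e_2-1},\qquad C=e_3\,a_d\,b_{d-e_3}\in\ZZ\setminus\{0\},
\]
where $a_d$ and $b_{d-e_3}$ are the leading coefficients of $f_1$ and $f_2$. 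Reducing modulo $p$ then yields $\overline{f}'=\overline{C}\,x^{e_1-1}(x-1)^{e_2-1}/\overline{f}_2^{\,2}$.

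Next, separability of $\overline{f}$ is equivalent to $\overline{f}'\neq 0$, hence to $\overline{C}\neq0$, which forces $p\nmid e_3$, $p\nmid a_d$, and $p\nmid b_{d-e_3}$; in particular $\deg\overline{f}_1=d$ and $\deg\overline{f}_2=d-e_3$. Set $g:=\gcd(\overline{f}_1,\overline{f}_2)$ (monic) and write $\overline{f}=p_1/p_2$ in lowest terms, so that $\deg\overline{f}=d-\deg g$ and, since $\deg\overline{f}_1>\deg\overline{f}_2$, the map $\overline{f}$ is totally ramified over $\infty$ with index $\deg p_1-\deg p_2=e_3$. Clearing denominators in the reduced formula for $\overline{f}'$ gives $(p_1'p_2-p_1p_2')\,g^2=\overline{C}\,x^{e_1-1}(x-1)^{e_2-1}$, whence $g=x^a(x-1)^b$ with $2a\le e_1-1$ and $2b\le e_2-1$. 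Using Proposition~\ref{prop:rednonconstant} I would then locate the remaining ramification of $\overline{f}$: since $\ord_0(\overline{f}_1)\ge e_1>d-e_3\ge\ord_0(\overline{f}_2)$, the $x$-exponent of $g$ equals $a=\ord_0(\overline{f}_2)$ and the ramification index of $\overline{f}$ at $0$ equals $\ord_0(\overline{f}_1)-a\ge e_1-a$; and the integer equality $f_1(1)=f_2(1)$ together with $(x-1)^{e_2}\mid f_1-f_2$ shows $p_1(1)=p_2(1)\neq0$, so $\overline{f}(1)=1$ with ramification index $\ord_1(p_1-p_2)\ge e_2-b$.

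Finally, apply Riemann--Hurwitz to the separable map $\overline{f}\colon\PP^1\to\PP^1$ of degree $d-a-b$: writing $\delta_P\ge e_P-1$ for the exponent of the different at $P$ (with equality precisely when $P$ is tamely ramified), one gets
\[
2(d-a-b)-2=\sum_P\delta_P\ \ge\ (e_3-1)+(e_1-a-1)+(e_2-b-1)=2d-2-a-b,
\]
the last equality using $e_1+e_2+e_3=2d+1$. Hence $a+b\le0$, so $a=b=0$; therefore $\deg\overline{f}=d$, which proves (a), and every inequality above is an equality, so $\overline{f}$ is unramified away from $\{0,1,\infty\}$ with different exponents there equal to $e_1-1$, $e_2-1$, $e_3-1$. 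Since $\delta_P=e_P-1$ holds if and only if $P$ is tamely ramified, this gives $p\nmid e_i$ for $i=1,2,3$, proving (b).

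The step I expect to be the main obstacle is the middle one: carefully tracking the degeneration of the numerator and denominator of $f$ under reduction --- showing the spurious common factor $g$ can only be supported at $x=0$ and $x=1$ with the stated exponent bounds, that $\overline{f}$ still fixes $x=1$, and that the ramification indices of $\overline{f}$ at $0$ and $1$ drop by at most $a$ and $b$ respectively. Once this bookkeeping is in place, the Riemann--Hurwitz count is forced and yields good reduction and tameness at a stroke.
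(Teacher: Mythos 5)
Your proof is correct, and its engine is the same as the paper's: compare the ramification data of the separable reduction with $(e_1,e_2,e_3)$, and let Riemann--Hurwitz force every inequality to be an equality, which simultaneously kills the common factor (giving $\overline{d}=d$) and forces tameness (giving $p\nmid e_i$). Where you genuinely differ is in how the intermediate inequalities are obtained. The paper never computes $f'$: it works with soft order/degree estimates, letting $g=\gcd(\overline{f}_1,\overline{f}_2)$ have zeros anywhere and recording only $\varepsilon_1=\ord_0(g)$, $\varepsilon_2=\ord_1(g)$, $\delta=\deg(g)$, together with $\overline{e}_1+\varepsilon_1\geq e_1$, $\overline{e}_2+\varepsilon_2\geq e_2$, $\varepsilon_1+\varepsilon_2\leq\delta$, $d\geq\overline{d}+\delta$ and $d-e_3\geq\overline{d}+\delta-\overline{e}_3$; these are exactly the inequalities that are later axiomatized in the definition of $S_{\underline{C},p}$ and reused via Lemma~\ref{Sred} in the proof of Theorem~\ref{thm:monored}. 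You instead compute the numerator of $f'$ exactly in characteristic zero, $f_1'f_2-f_1f_2'=e_3a_db_{d-e_3}\,x^{e_1-1}(x-1)^{e_2-1}$, and reduce it; this buys you more up front --- $p\nmid e_3$, the exact degrees of $\overline{f}_1,\overline{f}_2$, and the fact that the spurious factor is $g=x^a(x-1)^b$ with $2a\leq e_1-1$, $2b\leq e_2-1$ --- after which the same Riemann--Hurwitz count finishes in one stroke. Two small points of hygiene: the phrase ``totally ramified over $\infty$'' overstates what you use (and what is true a priori); all you need is that the index of $\overline{f}$ at $x=\infty$ is $\deg p_1-\deg p_2=e_3$. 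And the nonvanishing $p_2(1)\neq 0$ does not follow from the integer equality $f_1(1)=f_2(1)$ alone (both values could be divisible by $p$); it is supplied by Proposition~\ref{prop:rednonconstant}.(3), which you cite, or by combining $2b\leq e_2-1$ with $(x-1)^{e_2}\mid \overline{f}_1-\overline{f}_2\neq 0$. With those glosses, your argument is complete and slightly more self-contained, while the paper's softer bookkeeping is the version that generalizes to the bad-reduction analysis later in the paper.
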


\begin{proof}
Our definition
of the reduction of $f$, together with the assumption that $f$ is
normalized, implies that the points $x=0, 1, \infty$ on the source
$\PP^1_{\QQ}$ specialize to pairwise distinct points of
$\PP^1_{\FF_p}$ (by Proposition \ref{prop:rednonconstant}.(2,3)). In
particular, multiplying $\overline{f}$ by a constant (if necessary),
we may assume that $\overline{f}$ is also normalized.

We write $f = f_1/f_2$ and $d_1 = \deg(f_1), d_2 = \deg(f_2)$.  We
denote the degree of $\overline{f}_i$ by $\overline{d}_i$, and define
$\overline{d}=\deg(\overline{f})$.  The polynomials $\overline{f}_1$
and $\overline{f}_2$ are not necessarily relatively prime. Put
$g=\gcd(\overline{f}_1, \overline{f}_2)$ and $\delta=\deg(g)$.

Let $\overline{e}_i$ be the ramification indices of $\overline{f}$ at
$x=0,1, \infty$, respectively.  Our first goal is to compare these to
the ramification indices $e_i$ of $f$. We start by considering what
happens at $x=0$. For this we write
\[
\overline{f}_i=gh_i, \qquad i=1,2.
\]
Since $\gcd(h_1, h_2)=1$ it follows that
\[
\overline{e}_1=\ord_0(\overline{f})=\ord_0(h_1).
\]
The definition of the reduction implies that
\[
\ord_0(\overline{f}_1)=\ord_0(g)+\ord_0(h_1)\geq \ord_0(f_1)=e_1.
\]
For the right-most equality we have used that $\gcd(f_1,
f_2)=1$. 
 Defining
$\varepsilon_1:=\ord_0(g)$
we obtain
\begin{equation}\label{eq:rede1}
\overline{e}_1+\varepsilon_1\geq e_1.
\end{equation}

Interchanging the roles of $x=0$ and  $x=1$, we similarly obtain 
\begin{equation}\label{eq:rede2}
\overline{e}_2+\varepsilon_2\geq e_2,
\end{equation}
where
$\varepsilon_2:=\ord_1(g)$. Note
that interchanging the roles of $x=0$ and $x=1$ corresponds to conjugating
$\overline{f}$ by $\varphi(x)=1-x$. 
From the definitions it follows immediately that
\begin{equation}\label{eq:redeps}
\varepsilon_1+\varepsilon_2\leq \delta.
\end{equation}

The definition of the reduction of $f$ and our normalization implies that
\begin{equation}\label{eq:reddelta}
d=d_1\geq \overline{d}_1=\overline{d}+\delta.
\end{equation}
Finally, for the ramification index $\overline{e}_3$ of $\overline{f}$ at
$\infty$ we have
\begin{equation}\label{eq:rede3}
d-e_3=d_2\geq \overline{d}_2=\overline{d}_1-\overline{e}_3=\overline{d}+
\delta-\overline{e}_3.
\end{equation}

Since we assume that $\overline{f}$ is separable, the Riemann--Hurwitz
formula applied to $\overline{f}$, together with the inequalities
(\ref{eq:rede1}), (\ref{eq:rede2}), (\ref{eq:redeps}),
(\ref{eq:reddelta}), and (\ref{eq:rede3}), yields
\begin{equation} \label{eq:ineq}
\begin{split}
-2&\geq
-2\overline{d}+(\overline{e}_1-1)+(\overline{e}_2-1)+(\overline{e}_3-1)\\
 &=(\overline{e}_1+\varepsilon_1-1)+(\overline{e}_2+\varepsilon_2-1)+
(\overline{e}_3-\overline{d}-\delta-1)+(\delta-\varepsilon_1-\varepsilon_2)+(-\overline{d})
\\& \geq -2d+(e_1-1)+(e_2-1)+(e_3-1)=-2.
\end{split}
\end{equation}

It follows that both inequalities are equalities. The fact that the
last inequality is an equality implies that $\overline{d}=d$, and that
the inequalities (\ref{eq:rede1}), (\ref{eq:rede2}),
(\ref{eq:redeps}), (\ref{eq:reddelta}), and (\ref{eq:rede3}) are also equalities. This proves Statement (a).

The first inequality in \eqref{eq:ineq} is an equality
if and only if all ramification of $\overline{f}$ is tame. Hence we have
$p\nmid \overline{e}_i$ for all $i$.
The statement $\overline{d}=d$ implies that
$\varepsilon_1 = \varepsilon_2 = \delta = 0$.  Hence $\overline{e}_i =
e_i$ for all $i$. Statement (b) follows.
\end{proof}

The following  is an immediate consequence of Lemma~\ref{prop:wild}.
\begin{cor}\label{cor:coeff}
Let $f:\PP^1_\QQ\to \PP^1_\QQ$ be a normalized Belyi map of combinatorial
type $\underline{C}:=(d; e_1, e_2, e_3)$. Assume that $f$ has bad
reduction to characteristic $p$. Then the reduction $\overline{f}$ is
inseparable.
\end{cor}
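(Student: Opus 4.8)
The plan is to derive the corollary as the contrapositive of Proposition~\ref{prop:wild}(a). That proposition already does all the work: it shows that whenever the reduction $\overline{f}$ of a normalized Belyi map is separable, one necessarily has $\deg(\overline{f}) = d$, i.e.\ $f$ has good reduction. So the argument is simply: suppose, for contradiction, that $f$ has bad reduction but $\overline{f}$ is separable. By the reformulation recorded just after Definition~\ref{def:goodred}, bad reduction means $\deg(\overline{f}) < \deg(f) = d$. On the other hand, Proposition~\ref{prop:wild}(a) applied to the separable reduction $\overline{f}$ yields $\deg(\overline{f}) = d$, a contradiction. Hence $\overline{f}$ is inseparable.

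One small point to address is that the conclusion ``$\overline{f}$ is inseparable'' is only meaningful because $\overline{f}$ is nonconstant, which is exactly Proposition~\ref{prop:rednonconstant}(1); thus the separable/inseparable dichotomy for the cover defined by $\overline{f}$ is exhaustive and there is no degenerate case to rule out separately.

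There is essentially no obstacle in this step: all the difficulty is already absorbed into Proposition~\ref{prop:wild}, whose proof invokes the Riemann--Hurwitz formula for $\overline{f}$ --- legitimate precisely because separability is assumed there --- together with the chain of inequalities \eqref{eq:rede1}, \eqref{eq:rede2}, \eqref{eq:redeps}, \eqref{eq:reddelta}, \eqref{eq:rede3}, which quantify how the degree and the three ramification indices can only drop under reduction. The only thing worth double-checking while writing the corollary is that ``bad reduction'' as defined in Definition~\ref{def:goodred} is literally the negation of the degree-$d$ conclusion of the proposition, which the remarks following that definition confirm.
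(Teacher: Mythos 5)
Your argument is exactly the paper's: the corollary is stated there as an immediate consequence of Proposition~\ref{prop:wild}, i.e.\ the contrapositive of part (a), which is precisely what you write. Your extra remark that Proposition~\ref{prop:rednonconstant}(1) guarantees $\overline{f}$ is nonconstant, so the separable/inseparable dichotomy applies, is a fine (if unstated in the paper) bit of care.
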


\section{Good inseparable monomial reduction}\label{sec:mainthm}

In Section \ref{sec:Dynamics} we determine the dynamical behavior of
separable covers $f$ of degree $d$ (of a given combinatorial type),
whose reduction modulo $p$ satisfies $\overline{f}(x) = x^d$. 
Since $1$ is a ramification point of $\overline{f}$, it
follows that $\overline{f}$ is inseparable, and hence that $p\mid
d$. If this happens, we say that $f$ has \textit{good (inseparable)
monomial reduction} to characteristic $p$.  In
Theorem \ref{thm:monored} we prove necessary and sufficient conditions
for this  to occur.

\begin{defn}\label{def:S} 
\begin{enumerate}
\item A rational map $\psi$ of degree $d$ in characteristic $p$ can be written 
uniquely as $\psi = \psi' \circ \phi^n$, where $\phi$ is the $p$-Frobenius map and
 $\psi'$ is separable. Suppose that $\psi'$ is a normalized Belyi map of
 combinatorial type $(d';e'_1,e'_2,e'_3)$. Then we call the
 $\overline{e}_i = p^n e'_i$ for $i=1,2,3$ the \emph{generalized
 ramification indices} of $\psi$; we allow $d'$ and each of the $e_i'$ to be trivial.
\item Let $\underline{C} = (d;e_1,e_2,e_3)$ be a combinatorial type
  such that $e_1 + e_2 + e_3 = 2d+1$. Then we define
\[
S_{\underline{C},p} := \{ \psi: \PP^1_{\overline{\FF}_p} \to \PP^1_{\overline{\FF}_p} \mid \psi \text{  satisfies the
  following combinatorial conditions} \}
\]
\begin{enumerate}
\item  $\deg(\psi) := \overline{d} \leq d$, and
\item there exist $\varepsilon_1, \varepsilon_2, \delta \geq 0$ such that 
\[
\varepsilon_1 + \varepsilon_2 \leq \delta \leq d - \overline{d}
\]
and the generalized ramification indices $\overline{e}_i$ ($i=1,2,3$)
of $\psi$ satisfy
\[
\begin{split}
\overline{e}_1 &\geq e_1 - \varepsilon_1,\\
\overline{e}_2 &\geq e_2 - \varepsilon_2 ,\\
\overline{e}_3 &\geq e_3 - (d - \overline{d} - \delta).
\end{split}
\]
\end{enumerate}
\end{enumerate}
\end{defn}

The set $S_{\underline{C}, p}$ may be considered as a
characteristic-$p$ analog of the set of normalized Belyi maps of
combinatorial type $\underline{C}$. Lemma \ref{Sred} and Proposition
\ref{4.2} below imply that this set consists of one element. Moreover,
it follows that $\psi\in S_{\underline{C}, p}$ is the reduction (in
the sense of \S\ref{sec:BadRed}) of the (unique) normalized Belyi map
of type $\underline{C}$ in characteristic zero. In particular, it
follows that $\psi\in S_{\underline{C}, p}$ may be defined over
$\FF_p$.

\begin{lemma}\label{Sred}
Let $f \colon \mathbb{P}^1 \to \mathbb{P}^1$ be a normalized cover in
characteristic zero of combinatorial type $\underline{C}=(d;
e_1,e_2,e_3)$. Its reduction $\overline{f}$ modulo $p$ lies in
$S_{\underline{C},p}$. 
\end{lemma}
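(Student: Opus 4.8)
The plan is to unpack Definition~\ref{def:S} and observe that the two combinatorial conditions defining $S_{\underline{C},p}$ are precisely the inequalities already established (or easily re-derived) in the proof of Proposition~\ref{prop:wild}. First I would set $\overline{f}$ to be the reduction of $f$ modulo $p$ in the sense of \S\ref{sec:BadRed}, and I would treat the two cases according to whether $\overline{f}$ is separable or inseparable. In the separable case, Proposition~\ref{prop:wild}(a) gives $\overline{d}=d$, and then the displayed deductions at the end of that proof give $\varepsilon_1=\varepsilon_2=\delta=0$ and $\overline{e}_i=e_i$; thus the conditions of Definition~\ref{def:S}(2) hold trivially with $\varepsilon_1=\varepsilon_2=\delta=0$. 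So the content is really in the inseparable case.

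In the inseparable case, write $\overline{f}=\psi'\circ\phi^n$ with $\phi$ the $p$-power Frobenius and $\psi'$ separable; since $\overline{f}(0)=0$, $\overline{f}(\infty)=\infty$ (Proposition~\ref{prop:rednonconstant}(2)) and $\overline{f}(1)\neq 0,\infty$ (Proposition~\ref{prop:rednonconstant}(3)), the separable part $\psi'$ is again a normalized cover, of some combinatorial type $(d';e'_1,e'_2,e'_3)$, and the generalized ramification indices of $\overline{f}$ are $\overline{e}_i=p^n e'_i$, matching Definition~\ref{def:S}(1). Now I would rerun verbatim the bookkeeping from the proof of Proposition~\ref{prop:wild}: write $\overline{f}=\overline{f}_1/\overline{f}_2$, let $g=\gcd(\overline{f}_1,\overline{f}_2)$, $\delta=\deg g$, $\varepsilon_1=\ord_0 g$, $\varepsilon_2=\ord_1 g$. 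The inequality $\ord_0(\overline{f}_1)\geq\ord_0(f_1)=e_1$ coming from the definition of the reduction (the coefficient of $x^{e_1}$ in $f_1$ reduces to something, possibly zero, and lower coefficients vanish) together with $\overline{e}_1=\ord_0(\overline{f})=\ord_0(\overline{f}_1)-\varepsilon_1$ gives $\overline{e}_1+\varepsilon_1\geq e_1$, i.e. $\overline{e}_1\geq e_1-\varepsilon_1$; symmetrically (conjugating by $x\mapsto 1-x$) $\overline{e}_2\geq e_2-\varepsilon_2$; and the degree estimates $d=\deg f_1\geq\deg\overline{f}_1=\overline{d}+\delta$ and $d-e_3=\deg f_2\geq\deg\overline{f}_2=\overline{d}+\delta-\overline{e}_3$ give $\delta\leq d-\overline{d}$ and $\overline{e}_3\geq e_3-(d-\overline{d}-\delta)$. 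Finally $\varepsilon_1+\varepsilon_2=\ord_0 g+\ord_1 g\leq\deg g=\delta$. These are exactly conditions (a) and (b) of Definition~\ref{def:S}(2), so $\overline{f}\in S_{\underline{C},p}$.

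The one point that needs a little care — and I expect it to be the main (modest) obstacle — is checking that the notion of ``generalized ramification index'' from Definition~\ref{def:S}(1) agrees with $\ord_0(\overline{f})$, $\ord_1(\overline{f})$, and the order of pole at $\infty$ of $\overline{f}$ when $\overline{f}$ is inseparable; that is, that factoring off $\phi^n$ and reading off the separable part's ramification indices multiplied by $p^n$ really reproduces the local orders of vanishing of $\overline{f}$ itself. This follows because $\ord_P(\psi'\circ\phi^n)=p^n\ord_{\phi^n(P)}(\psi')$ and $\phi^n$ is a bijection on points fixing $0,1,\infty$, but it should be stated explicitly so that the inequalities above are genuinely inequalities about the $\overline{e}_i$ of Definition~\ref{def:S}. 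Once that identification is in place, the proof is just the reorganization of the Proposition~\ref{prop:wild} estimates described above, now without assuming separability, so none of the degree and order inequalities used ever relied on tameness.
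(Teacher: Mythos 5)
Your proof is correct and follows essentially the same route as the paper: the paper's own proof of Lemma~\ref{Sred} likewise just invokes the inequalities \eqref{eq:rede1}--\eqref{eq:rede3} from the proof of Proposition~\ref{prop:wild} (which never use separability), splitting into good/bad reduction where you split into separable/inseparable. Your explicit identification of the generalized ramification indices of Definition~\ref{def:S} with the local vanishing orders of $\overline{f}$ is a point the paper leaves implicit, but it is a refinement of the same argument rather than a different approach.
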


\begin{proof}
If $f$ has good reduction at $p$, choose $\varepsilon_1
=\varepsilon_2=\delta =0$ and the result is immediate.  If $f$ has bad
reduction, the result follows immediately from the proof of Proposition
\ref{prop:wild}, for $\delta$ and $\varepsilon_i$ ($i=1,2$) as in that
proof.
\end{proof}

The following proposition is a reformulation in our terminology of a
result of Osserman.

\begin{proposition}\cite[Theorem 4.2.(i)]{ossermanrational}\label{4.2}
For any combinatorial type $\underline{C}$ and prime number $p$, we have $\vert S_{\underline{C},p} \vert = 1$. 
 \end{proposition}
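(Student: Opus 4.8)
The plan is to prove the result by an existence–uniqueness argument, exploiting the fact that $S_{\underline{C},p}$ is nonempty by Lemma~\ref{Sred}. For existence, apply Lemma~\ref{Sred} to the unique normalized Belyi map $f$ of combinatorial type $\underline{C}$ in characteristic zero (which exists by Proposition~\ref{prop:rigid}); its reduction $\overline{f}$ modulo $p$ lies in $S_{\underline{C},p}$, so $|S_{\underline{C},p}| \geq 1$. The substance of the proposition is therefore the inequality $|S_{\underline{C},p}| \leq 1$, and this is exactly what should be extracted from \cite[Theorem 4.2.(i)]{ossermanrational}, translated into the present combinatorial language.

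First I would recall Osserman's setup: he studies separable rational maps in characteristic $p$ and their relation to maps in characteristic zero via the theory of limit linear series (or, more elementarily, via the Riemann--Hurwitz count together with the way ramification can only increase under specialization). His Theorem 4.2.(i) asserts, in effect, a rigidity statement: for a genus-zero three-branch-point datum, once one fixes the combinatorial type $\underline{C}$ upstairs (in characteristic zero) there is at most one separable-part-plus-Frobenius map downstairs whose ramification profile is compatible with specializing from $\underline{C}$. The bookkeeping in Definition~\ref{def:S} — the auxiliary nonnegative integers $\varepsilon_1, \varepsilon_2, \delta$ with $\varepsilon_1 + \varepsilon_2 \leq \delta \leq d - \overline{d}$ and the three inequalities $\overline{e}_i \geq e_i - (\cdots)$ — is precisely the combinatorial shadow, derived in the proof of Proposition~\ref{prop:wild}, of the degree drop and the common-factor cancellation $g = \gcd(\overline{f}_1,\overline{f}_2)$. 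So the key step is to match these inequalities with the hypotheses of Osserman's theorem and conclude that any $\psi \in S_{\underline{C},p}$ is determined up to the $\PGL_2$-normalization, which here is pinned down completely because $\psi(0)=0$, $\psi(1)\ne 0,\infty$, $\psi(\infty)=\infty$ (as in Proposition~\ref{prop:rednonconstant}).

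Concretely, I would argue as follows. Given $\psi \in S_{\underline{C},p}$, write $\psi = \psi' \circ \phi^n$ with $\psi'$ separable of combinatorial type $(d'; e_1', e_2', e_3')$ and $d = \overline{d}\,$? — no: $\deg \psi = \overline{d} = p^n d'$. The inequalities in Definition~\ref{def:S}(2b), combined with $e_1 + e_2 + e_3 = 2d+1$ and the Riemann--Hurwitz identity $e_1' + e_2' + e_3' = 2d'+1$ for $\psi'$, force the auxiliary quantities and hence the triple $(p^n e_1', p^n e_2', p^n e_3')$ into a narrow range; a direct summation (adding the three inequalities and using $\varepsilon_1+\varepsilon_2 \le \delta \le d-\overline{d}$) shows the total slack is zero, so $\overline{e}_i$ and $n$, and thence $d'$ and the $e_i'$, are uniquely determined by $\underline{C}$ and $p$. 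Then $\psi'$ is the unique normalized Belyi map of its (forced) combinatorial type by Proposition~\ref{prop:rigid}, and $\psi = \psi' \circ \phi^n$ is uniquely determined. Combined with existence, $|S_{\underline{C},p}| = 1$.

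The main obstacle is the faithful translation between Osserman's formalism and ours: Osserman phrases his rigidity result in the language of linear series and works intrinsically in characteristic $p$, whereas Definition~\ref{def:S} is tailored to maps arising as reductions of characteristic-zero Belyi maps. One must check that the combinatorial conditions (a)–(b) in Definition~\ref{def:S} are neither stronger nor weaker than the hypotheses under which Osserman proves uniqueness — in particular that allowing $d'$ or some $e_i'$ to be trivial does not introduce spurious extra elements — and that the normalization (fixing $0,1,\infty$) eliminates the residual $\PGL_2$-ambiguity that is present in Osserman's count of abstract covers. Once this dictionary is set up, the uniqueness is a formal consequence; the inequality-chasing that pins down $n$ and $(d';e_i')$ is then routine, parallel to the equality analysis at the end of the proof of Proposition~\ref{prop:wild}.
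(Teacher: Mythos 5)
Your existence half (reduce the unique characteristic-zero normalized Belyi map and invoke Lemma~\ref{Sred}) is fine and is exactly how the paper views existence; but the proposition itself is not proved in the paper --- it is quoted from Osserman, whose argument is intersection-theoretic: the ramification conditions at $0,1,\infty$ define Schubert cycles $\Sigma_{e_i-1}$ on the Grassmannian $G(1,d)$ of pencils of degree-$d$ polynomials, and Osserman shows that their intersection is scheme-theoretically a single point in every characteristic. Your attempt to replace this by a combinatorial uniqueness argument has two genuine gaps.

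First, the claim that ``a direct summation shows the total slack is zero'' is false. Summing the three inequalities of Definition~\ref{def:S}, using $\overline{e}_1+\overline{e}_2+\overline{e}_3=p^n(2d'+1)=2\overline{d}+p^n$ and $\varepsilon_1+\varepsilon_2\leq\delta\leq d-\overline{d}$, gives only $\overline{d}+p^n\geq d+1$; this leaves plenty of room for $\overline{d}<d$ and nonzero $\varepsilon_i,\delta$. Example~\ref{exa:basepts} exhibits exactly this: $\psi(x)=x^p$ lies in $S_{\underline{C},p}$ whenever $d>p$ and all $e_i<p$, with $\overline{d}=p<d$, and there the triple $(\varepsilon_1,\varepsilon_2,\delta)$ is not even unique. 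So the inequalities do not determine $n$, $d'$, or the $e_i'$ of a candidate $\psi$, and your reduction of uniqueness to a single forced combinatorial type collapses. Second, even for a fixed type of the separable part $\psi'$, you cannot cite Proposition~\ref{prop:rigid}: that is a characteristic-zero rigidity statement resting on Riemann's Existence Theorem and the Liu--Osserman count, and it says nothing about uniqueness of a separable cover of prescribed type over $\overline{\FF}_p$, where wild ramification can intervene. That uniqueness --- simultaneously over all admissible $(\overline{d},n,d',e_i')$, including inseparable maps whose associated linear series have base points --- is precisely the nontrivial content of Osserman's Theorem~4.2(i), which is why the paper cites it rather than re-deriving it; to close the gap you would have to reproduce his Schubert-cycle argument (or an equivalent characteristic-$p$ rigidity proof), not the inequality-chasing and characteristic-zero rigidity proposed here.
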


We sketch the idea of Osserman's approach in his proof of Proposition
\ref{4.2}. For details we refer to \cite{ossermanrational} and
\cite{ossermanlinear}.  
Osserman interprets a rational map
$f:\PP^1_K\to\PP^1_K$ (up to automorphisms of the image) of degree $d$
over a field $K$ as a linear series by associating with the rational
map $f=f_1/f_2$ the $2$-dimensional vector subspace $V:=\langle f_1,
f_2\rangle$ of the polynomials of degree less than or equal to $d$.
This linear series may be considered as a point on the Grasmannian
$G(1, d)$. 
The condition that the map has ramification index at least $e_i$ at
the point $P_i$ defines a Schubert cycle $\Sigma_{e_i-1}(P_i)$ on
$G(1, d)$.  Base points of the linear series correspond to
common zeros of $f_1$ and $f_2$.

Consider an arbitrary linear series in positive characteristic, which
we denote by $\langle \psi_1, \psi_2\rangle$.   The inequalities (a) and (b) in
Definition \ref{def:S} may be interpreted as conditions on the linear
series. Note that we do not require the polynomials
$\psi_i$ to be relatively prime.  The zeros of $g:=\gcd(\psi_1, \psi_2)$ are base points of the
linear series. (Compare to the proof of Proposition \ref{prop:wild},
where we denoted the orders of these zeros at $0,1$ by
$\varepsilon_1, \varepsilon_2$, respectively.)

Assume that $\langle \psi_1, \psi_2\rangle$ is a linear series
satisfying the inequalities (a) and (b) from Definition \ref{def:S}.  The
Riemann--Hurwitz formula, together with the condition that
$2d+1=e_1+e_2+e_3$, implies that the linear series $\langle \psi_1,
\psi_2\rangle$ does not have base points if $\psi=\psi_1/\psi_2$ is
separable. (This follows as in the proof of Proposition
\ref{prop:wild}.)  However, the linear series associated with the
reduction of a normalized Belyi map (as defined above) may have base
points. Moreover, the base-point divisor
$D:=\varepsilon_1[0]+\varepsilon_2[1]-\delta[\infty]$ need not be
unique.  (See Example \ref{exa:basepts} below for an example.)

Our Proposition \ref{prop:rigid} states that in characteristic zero the
intersection of the three Schubert cycles $\Sigma_{e_1-1}(0)\cap
\Sigma_{e_2-1}(1)\cap \Sigma_{e_3-1}(\infty)$ has dimension $0$ and
the intersection product is $1$. In other words, the intersection
consists of one point. Osserman gives an intersection-theoretic
argument to prove the same statement in arbitrary characteristic. More
precisely, he proves that the intersection product of the three
Schubert cycles in positive characteristic is scheme-theoretically a
point. The underlying point is the unique map $\psi\in
S_{\underline{C}, p}$.  

In this paper, we are mainly interested in the case of good
inseparable reduction. In this case we have
$\varepsilon_1=\varepsilon_2=\delta=0$, hence the base-point divisor
is unique in this situation.\\

We give an example of a combinatorial type $\underline{C}$ for which
the reduction of the normalized dynamical Belyi map of this type has
base points. Moreover, we will see in that in this case the linear
series satisfying the inequalities (a) and (b) is not unique, even though
the underlying rational function is.

\begin{example}\label{exa:basepts}
This example is taken from Osserman \cite[\S
  2]{ossermanrational} (two paragraphs above Proposition 2.1).  Let
$p$ be a prime and $\underline{C}=(d; e_1, e_2, e_3)$ be a type such
that $d>p$ and $e_i<p$ for $i=1,2,3$. Then
\[
x^p\in S_{\underline{C},p}
\]
as one may verify directly. 

We therefore have that $\overline{d}=p=\overline{e}_i$ for all $i$. The
inequalities (a) and (b) from Definition \ref{def:S} become
\[
\varepsilon_1\geq 0, \qquad \varepsilon_2\geq 0, \qquad \delta\leq
d-e_3, \qquad \varepsilon_1+\varepsilon_2\leq \delta.
\]
We conclude that for a given combinatorial type $\underline{C}$ and
prime $p$ the base-point divisor
$D=\varepsilon_1[0]+\varepsilon_2[1]-\delta[\infty]$
need not be unique.  The linear series corresponding to a solution
$(\varepsilon_1, \varepsilon_2, \delta)$ of the inequalities is
\[
\langle x^pg, g\rangle,\qquad \text{ with }g=x^{\varepsilon_1}(x-1)^{\varepsilon_2}.
\]

Dynamical Belyi maps as considered in this example do exist; see
\cite[Cor.~2.5]{ossermanlinear}. Here is a concrete instance.
Choose $p\geq 7$ and $d$ with $p<d<3(p-1)/2$ and $k$ such that
$d-p<k<(p-1)/2$. 
Let $\underline{C}=(d; d-k, 2k+1, d-k)$.  The normalized Belyi map of this combinatorial type is
given in Proposition \ref{prop:GenFams4}.  The expression for the
coefficients $a_i$ in that lemma shows both that $p \vert a_i$ for
$d-p+1 \leq i \leq d$, and that $a_i \equiv (-1)^k a_{d-p-i} \bmod{p}$ for $0 \leq i \leq d-p$ (these are non-zero modulo $p$).
From this it follows that
\[
\overline{f}=x^p.
\]
Moreover, it follows that 
\[
g=\gcd(\overline{f}_1, \overline{f}_2)=(-1)^{d-p}a_{d-p}x^{d-p}+\cdots + a_0\in \FF_p[x]
\]
has degree $d-p$. The roots of the polynomial $g$ correspond to base points of the linear series. 
\end{example}

\begin{theorem}\label{thm:monored}
Suppose that $f \colon \mathbb{P}^1 \to \mathbb{P}^1$ is a normalized dynamical Belyi map of combinatorial type
$\underline{C}=(d=p^n d'; e_1,e_2,e_3)$, where $p\nmid d'$.  Then the
reduction $\overline{f}$ of $f$ modulo $p$ satisfies $\overline{f}(x)
= x^d$ if and only if $e_2 \leq p^n$.
\end{theorem}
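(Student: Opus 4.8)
The plan is to translate the monomial-reduction condition into the combinatorial language of $S_{\underline{C},p}$ from Definition~\ref{def:S}, using Proposition~\ref{4.2} to identify $\overline{f}$ as the unique element of that set. Write $d = p^n d'$ with $p \nmid d'$, so that $x^d$ is a rational map of degree $d$ whose unique (generalized) ramification behavior comes from factoring $x^d = (x^{d'})\circ \phi^n$: the separable part $x^{d'}$ has combinatorial type $(d'; d', 1, d')$, so the generalized ramification indices of $x^d$ are $\overline{e}_1 = \overline{e}_3 = p^n d' = d$ and $\overline{e}_2 = p^n$. I would first check that $\overline{f}(x) = x^d$ forces $\deg \overline{f} = d$, i.e.\ good reduction; this is automatic since $x^d$ has degree exactly $d$, so there are no base points ($\delta = \varepsilon_1 = \varepsilon_2 = 0$) and the inequalities in Definition~\ref{def:S}(b) must hold with these choices.

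\textbf{The "only if" direction.} Suppose $\overline{f}(x) = x^d$. By Lemma~\ref{Sred} and Proposition~\ref{4.2}, $\overline{f}$ is the unique element of $S_{\underline{C},p}$, and since it equals $x^d$ we have good reduction with $\delta = \varepsilon_1 = \varepsilon_2 = 0$. The condition in Definition~\ref{def:S}(b) then reads $\overline{e}_i \geq e_i$ for each $i$; applied to $i = 2$ this gives $p^n = \overline{e}_2 \geq e_2$, which is exactly $e_2 \leq p^n$.

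\textbf{The "if" direction.} Conversely, suppose $e_2 \leq p^n$. The strategy is to verify directly that $x^d \in S_{\underline{C},p}$, and then invoke Proposition~\ref{4.2} (uniqueness) together with Lemma~\ref{Sred} ($\overline{f} \in S_{\underline{C},p}$) to conclude $\overline{f} = x^d$. So I need: $\deg(x^d) = d \leq d$ (trivial), and, taking $\varepsilon_1 = \varepsilon_2 = \delta = 0$ (legal since $0 \leq 0 \leq d - d = 0$), the inequalities $\overline{e}_1 \geq e_1$, $\overline{e}_2 \geq e_2$, $\overline{e}_3 \geq e_3$. The first and third are $d \geq e_1$ and $d \geq e_3$, which hold because $e_1, e_3 \leq d$ by the definition of an abstract combinatorial type of genus $0$. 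The middle inequality $\overline{e}_2 = p^n \geq e_2$ is precisely the hypothesis. Hence $x^d \in S_{\underline{C},p}$, and by uniqueness $\overline{f} = x^d$.

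\textbf{Main obstacle.} The delicate point is justifying that the generalized ramification indices of $x^d$ are genuinely $(d, p^n, d)$ in the sense of Definition~\ref{def:S}(1): one must confirm that the separable part of $x^d = x^{p^n d'}$ is exactly $x^{d'}$ (since $p \nmid d'$, the map $x^{d'}$ is separable and $x^{d} = x^{d'} \circ \phi^n$ with $\phi$ the $p$-Frobenius), and that the normalized Belyi map $x^{d'}$ of type $(d'; d', 1, d')$ is allowed — which the excerpt explicitly permits, since Definition~\ref{def:S}(1) says "we allow $d'$ and each of the $e_i'$ to be trivial." A secondary subtlety is that one should double-check the problem is not vacuous: when $n = 0$ (i.e.\ $p \nmid d$), the condition $e_2 \leq p^0 = 1$ forces $e_2 = 1$, a degenerate type excluded from the main setup, so the theorem has content precisely when $p \mid d$, consistent with the remark preceding it that good inseparable monomial reduction requires $p \mid d$.
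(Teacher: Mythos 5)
Your proposal is correct and follows essentially the same route as the paper: both directions are handled by identifying $\overline{f}$ with the unique element of $S_{\underline{C},p}$ via Lemma~\ref{Sred} and Proposition~\ref{4.2}, computing the generalized ramification indices of $x^d = x^{d'}\circ\phi^n$ as $(d, p^n, d)$, and noting that good reduction forces $\varepsilon_1=\varepsilon_2=\delta=0$ so that the inequality at $x=1$ becomes $p^n \geq e_2$. Your write-up in fact makes the "only if" direction slightly more explicit than the paper's terse version, but the argument is the same.
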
 

\begin{proof}
In the good monomial reduction case, i.e. where $\overline{f}(x)=x^d$,
we have $\overline{d} = d$, and the generalized ramification indices
are $\overline{e}_1 = \overline{e}_3 = d$, and $\overline{e}_2 = p^n$.
Hence, $e_2 \leq p^n$ is a necessary condition for good inseparable
monomial reduction.

Conversely, let $f$ be of combinatorial type $\underline{C}=(d=p^n d',
e_1,e_2,e_3)$ as in the statement of the theorem, and assume that
$e_2\leq p^n$. We claim that the map $g(x)=x^d$ lies in
$S_{\underline{C},p}$.

As before, we write $\psi$ as the composition of the purely inseparable
map of degree $p^n$ and the separable map $\psi'(x)=x^{d'}$, and we write
$e_1', e_2', e_3'$ for the ramification indices of $\psi'$ at $x=0, 1,
\infty$, respectively. Clearly, $\deg(\psi) =: \overline{d} = d$
satisfies $\overline{d} \leq d$. Moreover, the ramification indices
$\overline{e}_i$ of $g$ satisfy
\[
\begin{split}
\overline{e}_1&:=p^n e_1'=d\geq e_1,\\
\overline{e}_2&:=p^n \geq e_2,\qquad \text{(by assumption)},\\
\overline{e}_3&:=p^n e_3'=d\geq e_3.
\end{split}
\]
Choosing $\varepsilon_1=\varepsilon_2=\delta=0$, we see that $\psi$
satisfies the combinatorial conditions in Definition~\ref{def:S}, so
indeed $\psi \in S_{\underline{C},p}$. By Lemma \ref{Sred} and
Proposition \ref{4.2}, we obtain that $\overline{f} = g$, i.e., that
$f$ has good inseparable monomial reduction modulo $p$.
\end{proof}

\begin{remark}\label{specialcase}
Theorem \ref{thm:monored} can be viewed as a special case
of \cite[Theorem 2.4]{ossermanlinear}, which proves the existence of a
(necessarily unique) (in)separable cover for any combinatorial type
$(d; e_1,e_2,e_3)$ by studying the combinatorial properties of the
$e_i$. One can prove variants of the statement of
Theorem \ref{thm:monored} by specifying different possibilities for
the reduction $\overline{f}$ of $f$. 
This would amount to formulating conditions on the $e_i$ for reduction types other than the good inseparable monomial one.
\end{remark}

\begin{example}
Consider the combinatorial type $\underline{C} = (d=15; e_1, e_2,
e_3=d=15)$. The equation for the associated cover is given
in \cite[Proposition 5.1.2]{eskin}, and can alternatively be
determined from Proposition~\ref{prop:GenFams}. Computing the
reduction of $f$ modulo the primes $p=2,3,5,$ and $7$ yields the
following table. 
We immediately see that the results of the table are in accordance with Theorem \ref{thm:monored}.

\begin{center} 
\begin{longtable}{| c | l | l |}
\hline
 $e_2$ & $\overline{f}(x)$  at  $p=2$ &\textbf {Reduction Type} \\
\hline
$2$ & $x^{14}$ & bad \\
$3$ & $x^{15}+x^{14}+x^{13}$ & good separable \\
$4$ & $x^{12}$ & bad \\
$5$ & $x^{15}+x^{12}+x^{11}$ & good separable \\
$6$ & $x^{14}+x^{12}+x^{10}$ & bad \\
$7$ & $x^{15}+x^{14}+x^{13}+x^{12}+x^{11}+x^{10}+x^{9}$ & good separable \\
$8$ & $x^{8}$ & bad \\
$9$ & $x^{15}+x^{8}+x^{7}$ & good separable \\
$10$ & $x^{14}+x^{8}+x^{6}$ & bad \\
$11$ & $x^{15}+x^{14}+x^{13}+x^{8}+x^{7}+x^{6}+x^{5}$ & good separable \\
$12$ & $x^{12}+x^{8}+x^{4}$ & bad \\
$13$ & $x^{15}+x^{12}+x^{11}+x^{8}+x^{7}+x^{4}+x^{3}$ & good separable \\
$14$ & $x^{14}+x^{12}+x^{10}+x^{8}+x^{6}+x^{4}+x^{2}$ & bad \\
\hline \hline
$e_2$ & $\overline{f}(x)$ at $p=3$ &\textbf{Reduction Type} \\
\hline
$ e_2 \leq p=3$ & $x^{15}$ & good inseparable \\
$p=3 < e_2 \leq 2p=6$ & $2x^{15}+2x^{12}$ & good inseparable \\
$2p=6 < e_2 \leq 3p=9$ & $x^9$ & bad \\
$3p=9 < e_2 \leq 4p=12$ & $2x^{15}+x^9+x^6$ & good inseparable \\
$4p=12 < e_2 < 5p=d=15$ & $x^{15}+x^{12}+x^9+2x^6+2x^3$ & good inseparable \\
\hline \hline

$e_2$ & $\overline{f}(x)$ at $p=5$ &\textbf{Reduction Type} \\
\hline
$ e_2 \leq p=5$ & $x^{15}$ & good inseparable \\
$p=5 < e_2 \leq 2p=10$ & $3x^{15}+3x^{10}$ & good inseparable \\
$2p=10 < e_2 < 3p=d=15$ & $x^{15}+2x^{10}+3x^5$ & good inseparable \\
\hline \hline

$e_2$ & $\overline{f}(x)$ at $p=7$ &\textbf{Reduction Type} \\
\hline
$e_2 \leq 7$ & $x^{14}$ & bad \\
$ e_2 = 8$ & $5x^{15}+x^{14}+2x^8$ & good separable \\
$8 < e_2 \leq 14$ & $6x^{14}+2x^7$ & bad \\
\hline
\end{longtable}
\end{center}
\end{example}



\section{Dynamics}\label{sec:Dynamics}


Let $f: \mathbb{P}^1 \to \mathbb{P}^1$ be a rational map and let
$f^n$ denote the $n$th iterate of $f$. The \emph{(forward) orbit} of
a point $P$ under $f$ is the set $\mathcal{O}_f(P) = \{ f^n(P): n \geq
0 \}$. The \emph{backward orbit} of a point $P$ under $f$ is the set
$\bigcup_{n=1}^\infty \{Q \in \mathbb{P}^1: f^n(Q) = P\}$. We say a
point $P \in \mathbb{P}^1$ is \emph{periodic} if $f^n(P) = P$ for some
positive integer $n$. The smallest such $n$ is called the \emph{exact
period} of $P$. For a point $P$ of exact period $n$, we define
the \emph{multiplier} of $f$ at $P$ to be the $n$th derivative of $f$
evaluated at $P$, denoted by $\lambda_P(f)$. A point $P$
is \emph{preperiodic} if $f^n(P)=f^m(P)$ for some positive integers
$n \neq m$. If $P$ is preperiodic but not periodic, we say it
is \emph{strictly preperiodic}. Let $\text{PrePer}(f,\mathbb{Q})$
denote the set of all rational preperiodic points for $f$. Our goal is
to determine $\text{PrePer}(f,\mathbb{Q})$ for an interesting class of
Belyi maps.

\begin{theorem}\label{qdynamicsGeneral} Let $f$ be a normalized Belyi map of combinatorial type $(d; e_1, e_2, e_3)$, where $d$ satisfies at least one of the following conditions:
\begin{enumerate}
\item $p=2$ is a divisor of $d$ with valuation $\ell = \nu_2(d)$, 
\item $p=3$ is a divisor of $d$ with valuation $\ell=\nu_3(d)$,
\item $d=p^\ell$ for some prime $p$.
\end{enumerate}
Assume that $e_2 \leq p^\ell$. Then $\text{PrePer}(f, \mathbb{Q})$
consists entirely of all rational fixed points for $f$ and their
rational preimages.
\end{theorem}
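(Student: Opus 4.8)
The plan is to combine Theorem~\ref{thm:monored}, which identifies precisely when $f$ has good inseparable monomial reduction $\overline{f}(x) = x^d$ modulo $p$, with standard local-global results bounding the size of $\mathrm{PrePer}(f,\QQ)$ via reduction at a single prime of good reduction. Under each of the three hypotheses on $d$ together with $e_2 \leq p^\ell$, Theorem~\ref{thm:monored} gives $\overline{f}(x) = x^d$ over $\FF_p$. I would first record that this is a separable \emph{no}, but \emph{good} reduction in the sense of Definition~\ref{def:goodred}: $\deg(\overline{f}) = d$, so $f$ has good reduction at $p$, even though $\overline{f}$ itself is inseparable. The point is that $f$ is a rational map over $\QQ$ with good reduction at $p$, so the standard reduction map on $\PP^1(\QQ) = \PP^1(\QQ_p)$ to $\PP^1(\FF_p)$ is injective on preperiodic points in a suitable sense: more precisely, by a theorem of Morton--Silverman (or the treatment in Silverman's \emph{Arithmetic of Dynamical Systems}, Ch.~2), for a rational map with good reduction at $p$, a preperiodic point $P \in \PP^1(\QQ)$ reduces to a preperiodic point $\overline{P} \in \PP^1(\FF_p)$ of $\overline{f}$, and the formation of the orbit is compatible with reduction.

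The key step is then to analyze $\mathrm{PrePer}(\overline{f}, \overline{\FF}_p)$ for $\overline{f}(x) = x^d$. Over $\overline{\FF}_p$, write $d = p^\ell d'$ with $p \nmid d'$; the map $x \mapsto x^d$ factors as the $p^\ell$-power Frobenius followed by $x \mapsto x^{d'}$, and the preperiodic points of $x^d$ on $\PP^1(\overline{\FF}_p)$ are exactly $0$, $\infty$, and the roots of unity $\mu_{\infty}$ of order prime to $p$ — equivalently all of $\overline{\FF}_p^\times$, since every nonzero element of $\overline{\FF}_p$ is a root of unity of order prime to $p$. So $\mathrm{PrePer}(\overline{f}, \FF_q) = \PP^1(\FF_q)$ for every finite field $\FF_q \supseteq \FF_p$; this does \emph{not} immediately bound things. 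Instead I would use the finer local information: the cycle structure and multipliers. A periodic point $P \in \PP^1(\QQ)$ of $f$ of exact period $n$ has multiplier $\lambda_P(f) \in \QQ$; its reduction is the multiplier of $\overline{P}$ under $\overline{f} = x^d$. If $\overline{P} \notin \{0, \infty\}$, then $\overline{P} \in \overline{\FF}_p^\times$, and the multiplier of $x^d$ at a periodic point in $\overline{\FF}_p^\times$ is $d \cdot (\overline{P})^{d-1}$-iterated appropriately, which is $\equiv 0 \pmod p$ because $p \mid d$. Thus every $\QQ$-rational periodic point of $f$ that does not reduce to $0$ or $\infty$ has multiplier $\lambda_P(f)$ with $\nu_p(\lambda_P(f)) \geq 1$; but a standard argument (the multiplier of a rational periodic point of a map with good reduction is a $p$-adic integer, and a nonzero one in characteristic $0$ cannot be topologically nilpotent along a cycle unless $n$ is controlled) forces $P$ to in fact be a fixed point. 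More carefully: iterating, the multiplier along an $n$-cycle reducing into $\overline{\FF}_p^\times$ is divisible by $d^n$, hence by $p^{\ell n}$; combined with the good-reduction bound on periods (periods of $\QQ$-rational points divide a quantity depending only on the reduction type at $p$, and here the reduction $x^d$ on $\FF_p$ has all points fixed or in short cycles determined by the multiplicative order of $d'$ mod small integers), one concludes that the only $\QQ$-rational periodic points are those reducing to $0$ or $\infty$, which by Proposition~\ref{prop:rednonconstant}(2) are exactly the points reducing to the fixed points $0, \infty$ of $f$. Finally, $0$, $1$, $\infty$ are the only ramification points and are fixed by $f$; a short separate argument (or appeal to the explicit local analysis at the three branch points) shows the periodic points reducing to $0$ and $\infty$ are precisely $0$ and $\infty$ themselves, so $\mathrm{Per}(f,\QQ) = \mathrm{Fix}(f,\QQ)$, and hence $\mathrm{PrePer}(f,\QQ)$ is exactly the set of $\QQ$-rational fixed points together with their $\QQ$-rational preimages.

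The main obstacle I expect is the transition from "$\overline{f} = x^d$ has all of $\PP^1(\overline{\FF}_p)$ preperiodic" — which is useless as a naive bound — to the actual finiteness statement. The resolution has to go through multipliers: because $p \mid d$, the reduction $x^d$ is "superattracting everywhere away from the cycle through $1$," and the cycle-length constraints from good reduction together with $p$-adic valuation of multipliers pin down that no new rational periodic points can appear. I would want to cite the precise form of the Morton--Silverman good-reduction theorem and the multiplier congruence, and then carefully handle the behavior at the three special points $0, 1, \infty$, where $\overline{f} = x^d$ is ramified. The case analysis $(1)$–$(3)$ on $d$ is only used to guarantee, via Theorem~\ref{thm:monored}, that $\overline{f}(x) = x^d$ holds for a \emph{specific} usable prime $p$ (namely $2$, $3$, or the unique prime dividing $d$), so once that reduction is in hand the three cases are treated uniformly. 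For the explicit family in Proposition~\ref{fixedptlemma} one would then additionally identify the rational fixed points and their preimages by hand; that is a separate computation not needed for the general statement here.
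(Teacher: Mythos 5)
Your toolkit is the same as the paper's (Theorem~\ref{thm:monored} to get $\overline{f}(x)=x^d$, good reduction in the sense of Definition~\ref{def:goodred}, and the local period theorem, which is the paper's Theorem~\ref{mrpe}), but the central deduction has a genuine gap. The correct use of Theorem~\ref{mrpe} is: a $\QQ$-rational periodic point $P$ of exact period $n$ reduces to a point $\overline{P}\in\PP^1(\FF_p)$ of exact period $m$ under $\overline{f}$, and since $\overline{f}'(x)=dx^{d-1}\equiv 0$ the multiplier of $\overline{P}$ is $0$, so its order $r$ in $\FF_p^*$ is infinite and the only surviving option is $n=m$. One must then show $m=1$, i.e.\ that every periodic point of $x^d$ on the \emph{finite} set $\PP^1(\FF_p)$ is actually fixed; this is exactly where hypotheses (1)--(3) enter: $\PP^1(\FF_2)=\{0,1,\infty\}$ is fixed pointwise, on $\PP^1(\FF_3)$ the point $-1$ is either fixed or maps to $1$ (so never lies in a longer cycle), and for $d=p^\ell$ the map $x^{p^\ell}$ is the identity on $\PP^1(\FF_p)$. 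Your proposal never makes this step: instead you conclude that ``the only $\QQ$-rational periodic points are those reducing to $0$ or $\infty$,'' which is false ($1$ is a fixed point of $f$ reducing to $1$; for $d=3$ the map of Proposition~\ref{fixedptlemma} has the rational fixed point $1/2$, which reduces to $-1$ mod $3$), and your closing claim that the rational periodic points are ``precisely $0$ and $\infty$ themselves, so $\mathrm{Per}(f,\QQ)=\mathrm{Fix}(f,\QQ)$'' conflates ``reduces to a fixed point'' with ``is a fixed point'' and would contradict the theorem's own statement, which allows additional rational fixed points beyond $0$ and $\infty$.

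Relatedly, your assertion that the case analysis (1)--(3) ``is only used to guarantee \ldots\ that $\overline{f}(x)=x^d$ holds for a specific usable prime'' and that the cases are then ``treated uniformly'' misses the actual role of these hypotheses. Monomial reduction holds at any prime $p\mid d$ with $e_2\leq p^{\nu_p(d)}$; the restriction to $p\in\{2,3\}$ or $d=p^\ell$ is needed because for other primes the reduced map $x^d$ can have cycles of length greater than $1$ on $\PP^1(\FF_p)$ (the paper's Remark~\ref{rem:generalize}: for $d=35$ and $p=5$ or $7$ there are $2$-cycles), in which case Theorem~\ref{mrpe} only yields $n=m\geq 2$ and rational $2$-cycles cannot be ruled out. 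The multiplier divisibility you invoke ($d^n$, hence $p^{\ell n}$, dividing the multiplier along a cycle) does not by itself force $n=1$; the conclusion comes from the cycle structure of $x^d$ on $\PP^1(\FF_p)$, not from $p$-adic valuations of multipliers in characteristic zero.
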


Recall that the condition $e_2\leq p^\ell$ implies that $f$ has good
 monomial reduction modulo $p$  (Theorem~\ref{thm:monored}).  To prove
 Theorem~\ref{qdynamicsGeneral}, we will use the following well-known
 theorem. 

\begin{theorem}\cite[Theorem 2.21]{ADS} \label{mrpe}
Let $f: \mathbb{P}^1_K \to \mathbb{P}^1_K$ be a rational function of degree $d \geq 2$ defined over a local field $K$ with residue field $k$ of characteristic $p$. Assume that $f$ has good reduction and that $P \in \mathbb{P}^1(K)$ is a periodic point for $f$ of exact period $n$. Let $m$ denote the exact period of $\overline{P}$ under the reduced map $\overline{f}$, and let $r$ denote the order of the multiplier $\lambda_{\overline{f}}(\overline{P})$ in $k^*$. Then one of the following holds:
\[n=m\]
\[ n=mr\]
\[n=mrp^e, e \in \mathbb{Z}, e>0.\]
\end{theorem}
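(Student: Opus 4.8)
The plan is to prove this standard local statement by analyzing the action of $f$ on the residue disk of $\overline{P}$. Let $\OO$ be the ring of integers of $K$, let $\mm$ be its maximal ideal with uniformizer $\pi$, and write $v$ for the valuation. I first treat the main case $\lambda:=\lambda_{\overline{f}}(\overline{P})\in k^{*}$ (so $\overline{P}$ is not a critical point of $\overline{f}$) and deal with $\lambda=0$ at the end. \emph{Step 1.} Since $\overline{f}^{\,n}(\overline{P})=\overline{f^{n}(P)}=\overline{P}$, the exact period $m$ of $\overline{P}$ divides $n$; replacing $f$ by $g:=f^{m}$ preserves good reduction (because $\overline{g}=\overline{f}^{\,m}$ is a morphism of degree $d^{m}$), turns $P$ into a point of exact period $t:=n/m$, and makes $\overline{P}$ a \emph{fixed} point of $\overline{g}$ whose multiplier $\overline{g}'(\overline{P})=\prod_{i=0}^{m-1}\overline{f}'(\overline{f}^{\,i}(\overline{P}))$ equals $\lambda$, of order $r$ in $k^{*}$. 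It now suffices to show $t\in\{1,\,r,\,rp^{e}\colon e\ge 1\}$, for then $n=mt$ has the required form.

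\emph{Step 2.} Conjugating $g$ by an element of $\PGL_{2}(\OO)$ (harmless for good reduction) I may assume $\overline{P}=\overline{0}$, so that the residue disk of $\overline{P}$ is $\mm\subset\PP^{1}(K)$ and $g$ restricts there to a map given by a power series $g(z)=a_{0}+a_{1}z+a_{2}z^{2}+\cdots\in\OO[[z]]$ converging on $\mm$, with $a_{0}\in\mm$ and $\overline{a_{1}}=\lambda\in k^{*}$, hence $a_{1}\in\OO^{*}$; thus $g$ restricts to a bijection of $\mm$ (its inverse is again a power series in $\OO[[z]]$ preserving $\mm$) and the whole forward orbit of $P$ lies in $\mm$. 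If $r>1$ then $a_{1}-1\in\OO^{*}$, so Hensel's lemma provides a fixed point $z_{0}\in\mm$ of $g$, and after translating by $z_{0}$ I may assume $g(0)=0$. If moreover $t>1$ then $P\ne 0$; writing $g(z)=z(a_{1}+a_{2}z+\cdots)$ one checks that $v(g^{s}(P))=v(P)$ for all $s$ and $g^{s+1}(P)/g^{s}(P)\equiv\lambda\pmod{\mm}$, and multiplying these congruences around the cycle yields $1=g^{t}(P)/P\equiv\lambda^{t}\pmod{\mm}$, whence $r\mid t$. Writing $t=rt'$ and replacing $g$ by $g^{r}$ (good reduction, $\overline{0}$ fixed, reduced multiplier $\lambda^{r}=1$), I have reduced to: if the reduced multiplier is $1$, then $t'$ is a power of $p$.

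\emph{Step 3.} Now $a_{1}\in 1+\mm$ and $g$ is still a bijection of $\mm$, so it induces a permutation $\sigma_{N}$ of the finite set $\mm/\mm^{N}$ for each $N\ge 1$. For $N$ large the finitely many distinct points of the orbit of $P$ are still distinct modulo $\mm^{N}$, so the exact period of $P$ under $\sigma_{N}$ equals $t'$, and hence $t'\mid\ord(\sigma_{N})$. Since $\sigma_{1}=\id$ on the one-point set $\mm/\mm$, it remains to show that $\ord(\sigma_{N+1})/\ord(\sigma_{N})\in\{1,p\}$ for all $N$: indeed, if $g^{j}(z)\equiv z\pmod{\mm^{N}}$ for all $z\in\mm$, write $g^{j}(z)=z+\varepsilon(z)$ with $\varepsilon\in\OO[[z]]$ and $\varepsilon(\mm)\subseteq\mm^{N}$; then $\varepsilon(z+h)-\varepsilon(z)\in\mm^{N}\cdot\mm^{N}\subseteq\mm^{N+1}$ for $h\in\mm^{N}$, so inductively $g^{sj}(z)\equiv z+s\,\varepsilon(z)\pmod{\mm^{N+1}}$, and with $s=p$ this gives $g^{pj}(z)\equiv z+p\,\varepsilon(z)\equiv z\pmod{\mm^{N+1}}$ since $p\in\mm$. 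Thus the kernel of $\langle\sigma_{N+1}\rangle\twoheadrightarrow\langle\sigma_{N}\rangle$ is cyclic of exponent dividing $p$, so $\ord(\sigma_{N})$ is a power of $p$ for every $N$, and therefore so is $t'$. This finishes the case $\lambda\ne 0$.

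\emph{The degenerate case and the main obstacle.} If $\lambda=0$, then after Step 1 the point $\overline{P}$ is a critical fixed point of $\overline{g}$, so in the power-series model of Step 2 one has $a_{1}\in\mm$; hence $a_{1}-1\in\OO^{*}$, Hensel gives a fixed point $z_{0}\in\mm$, and $|g(z)-z_{0}|=|g(z)-g(z_{0})|<|z-z_{0}|$ strictly for every $z\in\mm$, so $z_{0}$ is the only periodic point of $g$ in the disk; thus $t=1$ and $n=m$. The main obstacle is Step 3: quantifying how the period can grow as one refines the congruence modulo higher powers of $\mm$, and in particular the ``Lipschitz'' estimate $\varepsilon(z+h)-\varepsilon(z)\in\mm^{N+1}$ together with the genuine bijectivity of $g|_{\mm}$, which is what makes the group-theoretic counting legitimate; Steps 1 and 2 are essentially bookkeeping with the reduction map and Hensel's lemma.
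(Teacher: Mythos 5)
This theorem is not proved in the paper at all: it is quoted verbatim from Silverman's \emph{Arithmetic of Dynamical Systems} (the reference [ADS, Theorem 2.21]), so there is no internal proof to compare against; what you have written is essentially the standard residue-disk proof of that result (pass to $f^{m}$ so the reduced point is fixed, use the unit multiplier to force $r\mid t$, then show that with reduced multiplier $1$ the period can only grow by factors of $p$ as one refines modulo powers of $\mm$), and its overall architecture, including the separate treatment of $\lambda=0$ via the strict contraction, is sound. One correction is needed at precisely the step you flag as the crux. The assertion $\varepsilon(z+h)-\varepsilon(z)\in\mm^{N}\cdot\mm^{N}$ is not the right reason and is false in general: for an arbitrary $\varepsilon\in\OO[[z]]$ with $\varepsilon(\mm)\subseteq\mm^{N}$ one only gets $\varepsilon(z+h)-\varepsilon(z)\in h\,\OO=\mm^{N}$ (take $\varepsilon(z)=z$, $N=1$), which would not suffice for the induction. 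What saves the argument is the hypothesis you state at the start of Step 3 but never invoke in the estimate, namely $a_{1}\in 1+\mm$. Writing $\varepsilon(z+h)-\varepsilon(z)=h\,B(z,h)$ where, for $z\in\mm$ and $h\in\mm^{N}$, one has $B(z,h)\equiv b_{1}\pmod{\mm}$ with $b_{1}$ the linear coefficient of $\varepsilon=g^{j}-\operatorname{id}$, you get $b_{1}=(g^{j})'(0)-1=\prod_{i=0}^{j-1}g'\bigl(g^{i}(0)\bigr)-1\in\mm$, because $g'(w)\equiv a_{1}\equiv 1\pmod{\mm}$ for every $w\in\mm$. Hence $\varepsilon(z+h)-\varepsilon(z)\in h\,\mm\subseteq\mm^{N+1}$, which is exactly what the induction $g^{sj}(z)\equiv z+s\,\varepsilon(z)\pmod{\mm^{N+1}}$ requires, and then $s=p$ works since $p\in\mm$. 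With that repair (and noting that injectivity of $g$ on $\mm$, coming from the isometry property, already suffices to make $\sigma_{N}$ a permutation of the finite set $\mm/\mm^{N}$), your proof is complete and agrees in substance with the cited source.
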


\begin{proof}[Proof of Theorem~\ref{qdynamicsGeneral}]
Let $p$ be a prime in one of the three cases of the statement. To apply
Theorem \ref{mrpe},  we consider $f$ as element of $\QQ_p(x)$.

First suppose that $d=p^\ell$. When we reduce $f$
modulo $p$, we get $\overline{f}(x) = x^d$. All points in
$\PP^1(\mathbb{F}_p)$ are fixed points for $\overline{f}$. Moreover,
they are all critical points because the derivative of $\overline{f}$
is identically zero, so the multiplier of any point in $\mathbb{F}_p$
is zero. In the language of Theorem~\ref{mrpe}, for any
$\alpha \in \mathbb{Q}$ that is periodic under $f$, we have $m=1$ and
$r = \infty$. Therefore, $n=1$, so any rational periodic point for $f$
must be a fixed point.
 
If $2 \mid d$, reduce $f$ modulo $2$ to get $\overline{f}(x) = x^d$. All points in
$\PP^1(\mathbb{F}_2)$ are fixed and critical, so Theorem~\ref{mrpe}
implies that any periodic point for $f$ in $\mathbb{Q}$ must also be
fixed. 

Now assume that $3 \mid d$. In the case that $d$ is even, the points
in $\PP^1(\mathbb{F}_3)$ are all fixed under the reduction $\overline{f}$
of $f$ modulo $3$.  In the case that $d$ is odd, the points
$0,1, \infty$ are fixed and $\overline{f}(-1)=1$. This implies that $-1$ is
strictly preperiodic. In either case, the only
periodic points for $\overline{f}$ are fixed and critical, so once again
Theorem~\ref{mrpe} implies that all rational periodic points for $f$
must also be fixed points.

In all cases, the only periodic rational points for $f$ are
fixed points. Thus, $\text{PrePer}(f, \mathbb{Q})$ consists solely of
rational fixed points and their rational preimages.
\end{proof}

\begin{remark}\label{rem:generalize}
Each of the three conditions on primes dividing $d$ in Theorem~\ref{qdynamicsGeneral} ensures that all periodic points for the reduced map $\overline{f}$ are fixed points. This is not always true for arbitrary $d$ and $p$. For example, if $d=35$ and we reduce modulo $5$, the resulting map $\overline{f}(x) = x^{35}$ on $\mathbb{F}_5$ contains a periodic cycle of length two: $\overline{f}(2) = 3$ and $\overline{f}(3) = 2$. If we instead reduce modulo $7$, we see that $\overline{f}$ on $\mathbb{F}_7$ also has a $2$-cycle: $\overline{f}(2) = 4$ and $\overline{f}(4) = 2$. Thus in this case, we cannot use Theorem~\ref{mrpe} to deduce a statement analogous to that of Theorem~\ref{qdynamicsGeneral} because it is possible that $f$ contains a rational periodic point of exact period $2$.
\end{remark}

The following proposition gives a slightly stronger statement than
Theorem \ref{qdynamicsGeneral} in the first case of that theorem.

\begin{proposition}\label{prop:dynamicsp=2}
Let $f$ be the unique normalized Belyi map of combinatorial type $(d;
d-k, k+1, d)$. Write $\nu:=\nu_2(d)$ for the $2$-adic valuation of
$d$. Assume that $k+1\leq 2^\nu$. 
Then the only fixed points of $f$ in $\PP^1(\QQ)$ are $x=0, 1, \infty$.
\end{proposition}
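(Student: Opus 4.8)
The plan is to pass to $2$-adic estimates, using that the hypothesis $k+1\le 2^\nu$ places $f$ in the good monomial reduction case at $p=2$. First I would record the reduction facts that are needed. Writing $f(x)=c\sum_{i=0}^{k}a_ix^{d-i}$ as in Proposition~\ref{prop:GenFams}, one has $ca_i=\frac{(-1)^{k-i}}{k!}\binom ki\prod_{0\le j\le k,\,j\ne i}(d-j)$, and since $k<2^\nu=2^{\nu_2(d)}$ we have $\nu_2(d-j)=\nu_2(j)$ for $1\le j\le k$ while $\nu_2(d)=\nu$; a short computation then gives $\nu_2(ca_0)=0$ and $\nu_2(ca_i)=\nu_2\binom ki+\nu-\nu_2(i)\ge 1$ for $1\le i\le k$ (here $\nu_2(i)\le\nu-1$ because $i\le k<2^\nu$). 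One can also deduce $\overline{f}=x^d$ from Theorem~\ref{thm:monored}. Consequently $f\in\ZZ_{(2)}[x]$, $\overline{f}(x)=x^d$, the leading coefficient of $f$ is a $2$-adic unit, and every lower-degree coefficient of $f$ lies in $2\ZZ_{(2)}$. Note also $d=e_1+k\ge 3$ with $e_1=d-k\ge 2$, and that $\infty$ is a fixed point since $f$ is a polynomial of degree $\ge 2$; so it remains to prove that the only finite rational fixed points of $f$ are $0$ and $1$.

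I would then split a finite rational fixed point $\alpha$ according to the three points of $\PP^1(\FF_2)=\{0,1,\infty\}$ it can reduce to: either $\nu_2(\alpha)<0$, or $\nu_2(\alpha)\ge 1$, or $\nu_2(\alpha-1)\ge 1$. If $\nu_2(\alpha)=-m<0$, the term $ca_0\alpha^d$ has valuation $-dm$, which is strictly smaller than the valuation $\ge 1-(d-i)m>-dm$ of every other term of $f(\alpha)$ (because $i\ge 1$), so $\nu_2(f(\alpha))=-dm\ne -m=\nu_2(\alpha)$ as $d\ge 3$; hence $f(\alpha)\ne\alpha$ and no finite fixed point reduces to $\infty$. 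If $\alpha\ne 0$ and $\nu_2(\alpha)=m\ge 1$, then $ca_0\alpha^d$ has valuation $dm\ge 3m$, while every other term of $f(\alpha)$ has valuation $\ge 1+(d-i)m\ge 1+(d-k)m\ge 1+2m$ (using $e_1=d-k\ge 2$), so $\nu_2(f(\alpha))>m=\nu_2(\alpha)$ and again $f(\alpha)\ne\alpha$; thus $\alpha=0$ is the only fixed point reducing to $0$.

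The remaining case, $\nu_2(\alpha-1)\ge 1$, is where the ramification at $x=1$ enters, and I expect it to be the crux. Here I would substitute $x=1+t$ and set $h(t):=f(1+t)-(1+t)$. Since $x=1$ is a fixed point of ramification index $e_2=k+1$, the polynomial $f(x)-1$ vanishes to order $k+1\ge 2$ at $x=1$, so $f(x)-1=\sum_{j\ge k+1}b_j(x-1)^j$ with all $b_j\in\ZZ_{(2)}$ (they lie in $\ZZ_{(2)}$ because $f\in\ZZ_{(2)}[x]$ and expanding $x^m=((x-1)+1)^m$ only brings in integer binomial coefficients); hence $h(t)=-t+\sum_{j\ge k+1}b_jt^j$. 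Now if $\tau:=\alpha-1\ne 0$, then dividing $h(\tau)=0$ by $\tau$ gives $-1+\sum_{j\ge k+1}b_j\tau^{j-1}=0$; but $\nu_2(b_j\tau^{j-1})\ge j-1\ge k\ge 1$ for each $j\ge k+1$, so the left-hand side is $\equiv -1\pmod 2$, a contradiction. Therefore $\alpha=1$. Combining the three cases, the only fixed points of $f$ in $\PP^1(\QQ)$ are $0$, $1$, and $\infty$.

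The main obstacle is precisely this last case. In the residue disks at $0$ and $\infty$ the elementary valuation bounds suffice once $\overline{f}=x^d$ is known; but in the disk at $1$ the reduced map $x^d$ has $1$ as a \emph{critical} fixed point (as $2\mid d$), so the crude bound does not separate $1$ from nearby fixed points. The resolution is to use the sharper, characteristic-zero fact that $f(x)-x$ has a zero of order exactly one at $x=1$ — equivalently, that the ramification index $k+1\ge 2$ of $f$ at $1$ is fully preserved — together with $f\in\ZZ_{(2)}[x]$, to conclude that this simple zero is the only fixed point in the residue disk.
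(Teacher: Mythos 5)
Your proof is correct, and it rests on the same mechanism as the paper's: good monomial reduction at $p=2$ under the hypothesis $k+1\le 2^{\nu}$ (your direct computation $\nu_2(ca_0)=0$ and $\nu_2(ca_i)\ge 1$ for $1\le i\le k$ is exactly the coefficient-level content of Theorem~\ref{thm:monored} for this family), followed by a mod-$2$ analysis of the fixed points. The execution differs, though. The paper divides off the known fixed points once and for all, writing $f(x)-x=x(x-1)h(x)$ with $h$ computed explicitly from the $(x-1)$-expansion of Remark~\ref{rem:altGenFams}, and then disposes of all three residue classes in one stroke: $h(0)\equiv h(1)\equiv 1\pmod 2$, and $\deg\overline{h}=\deg h$ (leading coefficient a $2$-adic unit) rules out rational roots reducing to $\infty$. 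You instead treat the three residue disks of $\PP^1(\FF_2)$ separately: Newton-polygon-style valuation estimates at $\infty$ and $0$ using that the top coefficient of $f$ is odd and the others even, and, at $1$, the local expansion $f(x)-1=\sum_{j\ge k+1}b_j(x-1)^j$ with $b_j\in\ZZ_{(2)}$, so that dividing $f(\alpha)-\alpha=0$ by $\alpha-1$ leaves $-1$ plus terms of positive valuation. Your disk-at-$1$ argument is in effect a re-derivation of the paper's evaluation $h(1)=d-k-1-(d-k)\equiv 1\pmod 2$; its advantage is that it isolates where the hypothesis actually matters --- the disks at $0$ and $1$ need only $2$-integrality (plus the exact order of vanishing at $1$), while $k+1\le 2^{\nu}$ enters solely through the oddness of the leading coefficient $\pm\binom{d-1}{k}$ --- whereas the paper's route is shorter because the closed form of $h$ delivers both the unit values and the unit leading coefficient at once.
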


\begin{proof} Recall from Theorem~\ref{thm:monored} that the condition 
 $k+1\leq 2^\nu$ implies that $f$ has good monomial reduction to
 characteristic $2$. As in Remark \ref{rem:altGenFams}, we write
\[
f(x)=x^{d-k}\left(\sum_{i=0}^k c_i(x-1)^i\right), \qquad \text{ with }
c_i=(-1)^i\binom{d-k+i-1}{i}.
\]
In particular, $c_0=1$. One easily checks that
\[
h(x):=\frac{f(x)-x}{x(x-1)}=
\left(\sum_{i=0}^{d-k-2}x^i+x^{d-k-1}\sum_{i=0}^{k-1}c_{i+1}(x-1)^i\right).
\]

Since $f$ is branched at $3$ points, we have that $d-k\geq 2$. It follows that
\[
h(0)\equiv 1\pmod{2}, \qquad h(1)=d-k-1-\binom{d-k}{1}\equiv 1\pmod{2}.
\]
Therefore the reduction $\overline{h}(x)$ of $h(x)$ modulo $2$ does not
have any roots in $\FF_2$, and hence $h$ does not have any roots
in $\QQ$.  Here we have used that~$h$ has good reduction to
characteristic $2$, i.e.~$\deg(h)=\deg(\overline{h})$. This implies that
$h$ does not have any rational roots that specialize to $\infty$ when
reduced modulo $2$. 
\end{proof}

We will now look at one particular family of normalized Belyi maps and
use Theorem~\ref{qdynamicsGeneral} to determine
$\text{PrePer}(f, \mathbb{Q})$. Let $d \geq 3$ be the degree of
$f$. Consider the following family:
\begin{equation} \label{fam1}
f(x) = -(d-1)x^d + dx^{d-1}.
\end{equation}
Recall from Example \ref{exa:GenFams} that this is the unique
normalized Belyi map of combinatorial type $(d; d-1, 2, d)$.

\begin{prop}\label{fixedptlemma}
Let $f$ be defined as in Equation~\eqref{fam1}. Then:
\begin{enumerate}
\item \label{fixed01} The only fixed points for $f$ in $\mathbb{P}^1(\mathbb{Q})$ are $0, 1$ and $\infty$ (for all $d$) and $\frac12$ (for $d=3$).
\item The only additional rational points in the backward orbits of these fixed points are $\frac{d}{d-1}$ (for all $d$) and $-\frac12$ (for $d=3$).
\end{enumerate}
\end{prop}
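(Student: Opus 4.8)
The plan is to reduce each relevant equation $f(x)=c$ (with $c$ a candidate fixed point, or a point whose rational preimages must be understood) to an integer or exponential Diophantine equation and then dispatch it by comparing growth rates or by a $p$-adic valuation argument; throughout $d\ge 3$.

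For (1), write $f(x)=x^{d-1}\bigl(d-(d-1)x\bigr)$, so that $x=0$ and $x=\infty$ are fixed and the remaining fixed points are the roots of $P(x):=(d-1)x^{d-1}-dx^{d-2}+1$. By the rational root theorem any rational root of $P$ has the form $x=\pm 1/q$ with $q\in\ZZ_{>0}$, $q\mid d-1$; writing $d-1=qk$ and clearing denominators, $P(\pm1/q)=0$ becomes $q^{qk-1}=k(q-1)+1$ for the sign $+$, and $q^{qk-1}=k(q+1)+1$ for the sign $-$ (which moreover requires $d$ even). Since the left sides grow exponentially in $qk$ while the right sides grow only linearly in $k$, the first equation holds exactly for $q=1$ (any $k$; this is $x=1$) and for $(q,k)=(2,1)$ (i.e.\ $d=3$; this is $x=\tfrac12$), and the second has no solutions. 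This proves (1).

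For (2), note $f^{-1}(\infty)=\{\infty\}$ since $f$ is a polynomial, and $f^{-1}(0)=\{0,\tfrac{d}{d-1}\}$ from the factored form. The same rational-root-plus-substitution reduction applied to $f(x)=1$, i.e.\ to $(d-1)x^d-dx^{d-1}+1=0$, yields $q^{qk}=k(q-1)+1$ (sign $+$) or $q^{qk}=k(q+1)+1$ (sign $-$, requiring $d$ odd), whose only solutions are $q=1$ (the root $x=1$, in fact a double root since $x=1$ is a ramification point of $f$) and $(q,k)=(2,1)$, i.e.\ $d=3$, $x=-\tfrac12$. When $d=3$ one also checks directly, by the rational root theorem on the two resulting cubics, that $f^{-1}(\tfrac12)\cap\QQ=\{\tfrac12\}$ and $f^{-1}(-\tfrac12)\cap\QQ=\emptyset$. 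The crux is to show $f^{-1}\bigl(\tfrac{d}{d-1}\bigr)\cap\QQ=\emptyset$: the substitution $y=(d-1)x$ turns $f(x)=\tfrac{d}{d-1}$ into the \emph{monic} integer equation $m^{d-1}(d-m)=d(d-1)^{d-2}$, so any rational solution is an integer $m$ all of whose prime factors divide $d(d-1)$; a valuation comparison at each such prime $P$ --- using that $(d-1)v_P(m)\ge d-1$ while $v_P(d-m)$ is forced to be as small as possible, together with $\gcd(d-1,d-2)=1$ and the elementary bound $v_P(d-1)<d-1$ --- forces $m=\pm1$, and neither value satisfies the equation for $d\ge 3$. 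Hence $\tfrac{d}{d-1}$, and $-\tfrac12$ when $d=3$, have no rational preimages, so the backward orbit of $\{0,1,\infty\}$ (together with $\tfrac12$ when $d=3$) is exactly $\{0,1,\infty,\tfrac{d}{d-1}\}$ (together with $\{\tfrac12,-\tfrac12\}$ when $d=3$), which is the assertion.

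The work is not conceptually deep but requires care: one must correctly derive and then estimate the three families of Diophantine equations above, watching the parity of $d$, which governs whether the negative candidate roots $x=-1/q$ can occur; and in the valuation argument for $f^{-1}(\tfrac{d}{d-1})$ one must distinguish whether a prime dividing $m$ divides $d$ or divides $d-1$. A pleasant feature is that the exceptional rational points $\tfrac12$ and $-\tfrac12$ at $d=3$ emerge from the same analysis rather than needing separate treatment.
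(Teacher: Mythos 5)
Your proposal is correct and follows essentially the same route as the paper: the rational root theorem plus elementary growth estimates on the resulting Diophantine equations handles the fixed points and the preimages of $1$ (including the $d=3$ exceptions), and a $p$-adic valuation comparison rules out rational preimages of $\tfrac{d}{d-1}$. The only differences are organizational --- your substitution $y=(d-1)x$ producing a monic integer equation $m^{d-1}(d-m)=d(d-1)^{d-2}$ and the two-case analysis of primes dividing $d$ or $d-1$ is a tidy repackaging of the paper's clearing-of-denominators valuation argument (which gets its contradiction already from a prime dividing $d$), and your exponential-versus-linear estimates should be accompanied by the small-case checks (e.g.\ $(q,k)=(2,1)$) that make them rigorous.
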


\begin{proof} 
\begin{enumerate}
\item The fixed points of $f$ are the roots of
 $f(x)-x=-(d-1)x^d+dx^{d-1}-x$, which factors as follows:
\[
  f(x)-x = x(x-1)(-(d-1)x^{d-2} + x^{d-3} + x^{d-4} + \ldots +x+1).
\]
By the rational root theorem, any nonzero rational zero of the above
polynomial is of the form $\frac{1}{b}$, where $b$ divides $d-1$. 
If $\frac{1}{b}$ is a root of
$f(x)-x$, then $b$ satisfies:
\begin{equation}\label{fixedb} 
\frac{b^{d-1}-1}{b-1}=b^{d-2} + b^{d-3} + \ldots +b  +1= d.
\end{equation}

\noindent \textbf{ Claim}:  Equation \eqref{fixedb} does not have any integer
solutions for $d\geq 4$.

Statement (1) immediately follows from the claim.
 
By inspection, it follows that $b\notin\{ 0, \pm 1\}$, so we may
assume $|b| \geq 2$. Note that we must have $b \leq -2$ because if
$b>1$, the left hand side of Equation~\eqref{fixedb} is strictly
greater than $d$. Moreover, since $b$ is negative, $d$ must be even,
since the left hand side of Equation~\eqref{fixedb} is positive. Since
$d \geq 4$ and $b \leq -2$, we have:
\[ 
\sum_{i=0}^{d-2} b^i > b^{d-2}+b^{d-3} =(-b)^{d-3}(-b+1)\geq 3\cdot  2^{d-3} > d.
\]
The claim follows.

\item We have the following by direct calculation:
\[f^{-1}(0) = \left\{ 0, \frac{d}{d-1} \right\} \]

If $d=3, f^{-1}(1) = \{1, -\frac12\}$.  Otherwise, if $d>3$, an
argument similar to that in Part~\ref{fixed01} shows that
$f^{-1}(1) \bigcap \mathbb{Q} = \{1\}$: Suppose that $f(\frac1b) = 1$,
where $b \in \mathbb{Z}$. (By the rational root theorem, any such
rational preimage is of this form.) Then, $f(\frac1b) -1=0$, which,
after factoring $(x-1)$ from the left hand side, gives the following
equation:
\[ \sum_{i=0}^{d-1} b^i = d.\]
  Note that $b=1$ is one
solution to this equation. Any other solution for $b$ would require $b
<0$ and in particular, $b \leq -2$. Therefore, $d$ must be odd for the
sum to be positive. If $d \geq 5$, we have the following:
\[ \sum_{i=0}^{d-1} b^i \geq b^{d-1}+b^{d-2} \geq |b|^{d-2} \geq 2^{d-2} > d.\]
Thus,  $f^{-1}(1) \bigcap \mathbb{Q} = \{1\}$.

A direct calculation also shows that if $d=3$, then $-\frac12$ has no rational preimages, and $\frac12$ has no rational preimages except itself. It remains to show that $\frac{d}{d-1}$ has no rational preimages. Suppose $f(\frac{a}{b}) = \frac{d}{d-1}$ for some relatively prime integers $a$ and $b$. The rational root theorem implies that $a|d$ and $b|(d-1)^2$. After clearing denominators, we have the following equation:
\begin{equation}\label{secondpreimageof0}
 -(d-1)^2a^d+d(d-1)a^{d-1}b-db^d = 0.  
\end{equation} 
Reducing modulo
 $d-1$ yields $-b^d \equiv 0$, so $(d-1)|b^d$. Reducing modulo $d$
 yields $-a^d \equiv 0$, so $d|a^d$. Let $p$ be a prime dividing
 $d$. Suppose that the valuation $\nu_p(d) = k \geq 1$ and $\nu_p(a)
 = \ell$, for $1 \leq \ell \leq k$. Then
 $\nu_p(-(d-1)^2a^d+d(d-1)a^{d-1}b-db^d) = k$ because
 $\nu_p(db^d) = k$ and
 $\nu_p(-(d-1)^2a^d+d(d-1)a^{d-1}b) \geq \text{max}\{\ell^d,
 k+\ell^{d-1}\}> k$. This contradicts
 Equation~\eqref{secondpreimageof0}. 
\end{enumerate}
\end{proof}

\begin{cor}\label{qdynamicsfam1}
Let $f$ be the polynomial of degree $d$ in the family defined in
Equation~\eqref{fam1}, where either $2 \mid d, 3 \mid d$, or $d=p^\ell$ for
some prime $p$. 
 Then:
\begin{enumerate}
\item $\text{PrePer}(f, \mathbb{Q}) = \{0, 1, \frac32, \frac12, -\frac12 , \infty\}$ if $d=3$.
\item $\text{PrePer}(f, \mathbb{Q}) = \left\{ 0, 1, \frac{d}{d-1}, \infty \right\}$ if $d \neq 3$. 
\end{enumerate}
\end{cor}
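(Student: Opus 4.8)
The plan is to combine the explicit description of rational preperiodic points under the family \eqref{fam1} with the structural statement of Theorem~\ref{qdynamicsGeneral}. Since $f$ in \eqref{fam1} is a polynomial, $\infty$ is a totally ramified fixed point and contributes nothing new to the backward orbit other than itself, so the real content is about finite rational points. The hypothesis on $d$ (namely $2\mid d$, $3\mid d$, or $d=p^\ell$) is precisely what is needed to invoke Theorem~\ref{qdynamicsGeneral}, whose conclusion is that $\text{PrePer}(f,\QQ)$ equals the union of the rational fixed points of $f$ together with all of their rational preimages under all iterates $f^n$. Note that the combinatorial type here is $(d;d-1,2,d)$, so $e_2=2\leq p^\ell$ holds automatically for any prime $p$, and the hypothesis of Theorem~\ref{qdynamicsGeneral} is genuinely satisfied in each of the three listed cases.

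First I would record, from Proposition~\ref{fixedptlemma}(1), that the rational fixed points of $f$ are exactly $\{0,1,\infty\}$ for $d\neq 3$, with the extra fixed point $\tfrac12$ when $d=3$. By Theorem~\ref{qdynamicsGeneral}, $\text{PrePer}(f,\QQ)$ is then the set of all rational points $Q$ with $f^n(Q)$ equal to one of these fixed points for some $n\geq 0$. Next I would invoke Proposition~\ref{fixedptlemma}(2): the only rational points strictly in the backward orbit of $\{0,1,\infty\}$ (resp.\ $\{0,1,\tfrac12,\infty\}$ when $d=3$) beyond the fixed points themselves are $\tfrac{d}{d-1}$ for all $d$, and additionally $-\tfrac12$ when $d=3$. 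That proposition's proof already handles every relevant case: $f^{-1}(0)=\{0,\tfrac{d}{d-1}\}$, $f^{-1}(1)\cap\QQ=\{1\}$ for $d>3$ (and $=\{1,-\tfrac12\}$ for $d=3$), $f^{-1}(\infty)=\{\infty\}$ since $f$ is a polynomial, and $\tfrac{d}{d-1}$ itself has no rational preimages; for $d=3$ one additionally checks $\tfrac12$ and $-\tfrac12$ have no new rational preimages. Since none of these new points ($\tfrac{d}{d-1}$, and $-\tfrac12$ in the $d=3$ case) has any further rational preimage, the backward-orbit process terminates after one step, and the full preperiodic set is as claimed.

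The assembled conclusion is then: for $d\neq 3$ (satisfying one of the three divisibility conditions), $\text{PrePer}(f,\QQ)=\{0,1,\tfrac{d}{d-1},\infty\}$; and for $d=3$ (which is of the form $p^\ell$), $\text{PrePer}(f,\QQ)=\{0,1,\tfrac32,\tfrac12,-\tfrac12,\infty\}$, where $\tfrac32=\tfrac{d}{d-1}$ with $d=3$. This matches the two displayed cases in the statement. The main obstacle — really the only nontrivial point — is making sure that the backward orbit genuinely closes up, i.e.\ that after adjoining $\tfrac{d}{d-1}$ (and $-\tfrac12$ when $d=3$) there are no further rational preimages to adjoin; but this is exactly the content of Proposition~\ref{fixedptlemma}(2), so the corollary is essentially a bookkeeping exercise once Theorem~\ref{qdynamicsGeneral} and Proposition~\ref{fixedptlemma} are in hand. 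I would also remark that the hypothesis on $d$ is used only through Theorem~\ref{qdynamicsGeneral} (to rule out rational periodic points of period $>1$), while Proposition~\ref{fixedptlemma} holds for all $d\geq 3$.
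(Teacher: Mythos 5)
Your proposal is correct and follows essentially the same route as the paper: the corollary is proved there in two sentences by invoking Theorem~\ref{qdynamicsGeneral} (whose hypothesis $e_2=2\leq p^\ell$ is indeed automatic for the type $(d;d-1,2,d)$ in each of the three cases) and then reading off the fixed points and their rational backward orbits from Proposition~\ref{fixedptlemma}. Your additional remarks---checking the hypothesis explicitly and noting that the backward orbit closes up after one step---are just a more detailed spelling-out of the same bookkeeping.
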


\begin{proof}
Theorem~\ref{qdynamicsGeneral} states that
$\text{PrePer}(f, \mathbb{Q})$ consists solely of fixed points for $f$
and their rational preimages.  Proposition~\ref{fixedptlemma} then
completely describes all rational preperiodic points for $f$.
\end{proof}

\begin{remark}\label{rem:dynamics} The statement of Proposition 
\ref{fixedptlemma}.(2) may be partially generalized.
For simplicity we restrict to the case that $f$ is the unique
normalized Belyi map of combinatorial type $(d; d-k, k+1, d)$.  An
explicit formula for $f$ was determined in
Proposition \ref{prop:GenFams}. We use the terminology of that result.

In the proof of Proposition \ref{prop:GenFams} we showed that the
derivative of $f$ satisfies
\[
f'(x)=(-1)^kcx^{d-k-1}(x-1)^k, \qquad \text{ with }c>0.
\]
Distinguishing $4$ cases depending on whether $k$ and $d$ are even or
odd and considering the sign of $f'$ yields the following statement
for the real elements in the fibers $f^{-1}(0)$ and $f^{-1}(1)$.
\begin{enumerate}
\item Suppose that $d$ and $k$ are both even. Then 
$f^{-1}(0)\cap \RR=\{0\}$ and $f^{-1}(1)\cap \RR=\{1, \beta\}$ for some $\beta<0$.
\item Suppose that $d$ is odd and $k$ is even. Then $f^{-1}(0)\cap \RR=\{0\}$ and
 $f^{-1}(1)\cap \RR=\{1\}$.
\item Suppose that $d$ is even and $k$ is odd. Then 
$f^{-1}(0)\cap \RR=\{0, \gamma\}$ for some $\gamma>1$ and
 $f^{-1}(1)\cap \RR=\{1\}$.
\item Suppose that $d$ and $k$ are both odd. Then 
$f^{-1}(0)\cap \RR=\{0, \gamma\}$ for some $\gamma>1$ and
$f^{-1}(1)\cap \RR=\{1, \beta\}$ for some $\beta<0$.
\end{enumerate}

In particular, this determines the rational values in $f^{-1}(0)$ and
$f^{-1}(1)$ in the case that $d$ is odd and $k$ is even.
In the other cases, in principle it is possible to analyze when the real roots
$\beta, \gamma$ are rational, similarly to the proof of
Proposition \ref{fixedptlemma}. 
In Proposition \ref{prop:GenFams} we showed that the leading coefficient of $f(x)$ is
$ca_0=(-1)^k\binom{d-1}{k}.$ It follows that if $\beta<0$ is a
rational root of $f(x)-1$, then we have
\[
\beta=\frac{-1}{b}  \qquad\text{ with }b\in \NN \text{ such that }
b \mid \binom{d-1}{k}.
\]

Similarly, assume that $\gamma>1$ is a rational root of $f(x)$. We use
the expression $f(x)=x^{d-k}\sum_{i=0}^k c_i(x-1)^i$ from
Remark \ref{rem:altGenFams}. Since $c_0= 1$ 
and
$c_k=\pm \binom{d-1}{k}$, we 
find
\[
\gamma=1+\frac{1}{c} \qquad\text{with }c\in \NN \text{ such that }c\mid 
\binom{d-1}{k}.
\] 
\end{remark}

\bibliographystyle{amsplain}
\bibliography{ABEGKM_Covers}
\end{document}